\let\oldbibliography\thebibliography
\renewcommand{\thebibliography}[1]{%
  \oldbibliography{#1}%
  \setlength{\itemsep}{0.5mm}%
}
\newtheorem{thm}{Theorem}[section]
\newtheorem{cor}[thm]{Corollary}
\newtheorem{prop}[thm]{Proposition}
\newtheorem{lem}[thm]{Lemma}
\def\R{\mathbb{R}}
\def\P{\mathbb{P}}
\def\C{\mathbb{C}}
\def\N{\mathbb{N}}
\def\I{\infty}
\def\Id{{\textnormal{Id}}}
\def\txtd{{\textnormal{d}}}
\def\txte{{\textnormal{e}}}
\def\txti{{\textnormal{i}}}
\def\txtD{{\textnormal{D}}}
\newcommand{\be}{\begin{equation}}
\newcommand{\ee}{\end{equation}}
\newcommand{\bea}{\begin{eqnarray}}
\newcommand{\eea}{\end{eqnarray}}
\newcommand{\beann}{\begin{eqnarray*}}
\newcommand{\eeann}{\end{eqnarray*}}
\newcommand{\benn}{\begin{equation*}}
\newcommand{\eenn}{\end{equation*}}
\def\ra{\rightarrow}
\def\I{\infty}
\newcommand{\cC}{{\mathcal C}}  
\newcommand{\cF}{{\mathcal F}}  
\newcommand{\cG}{{\mathcal G}}  
\newcommand{\cI}{{\mathcal I}}  
\newcommand{\cK}{{\mathcal K}}  
\newcommand{\cL}{{\mathcal L}}  
\newcommand{\cO}{{\mathcal O}}  
\newcommand{\cT}{{\mathcal T}}  
\newcommand{\cX}{{\mathcal X}}  
\begin{document}

\title{Uncertainty Transformation via Hopf Bifurcation in 
Fast-Slow Systems}
\author{Christian Kuehn\thanks{Vienna University of Technology, Institute for Analysis and Scientific Computing, 1040 Vienna, Austria}}

\date{\today}   

\maketitle

\begin{abstract}
Propagation of uncertainty in dynamical systems is a significant challenge. Here we 
focus on random multiscale ordinary differential equation models. In particular, we 
study Hopf bifurcation in the fast subsystem for random initial conditions. 
We show that a random initial condition distribution can be transformed during 
the passage near a delayed/dynamic Hopf bifurcation: (I) to certain classes of symmetric 
copies, (II) to an almost deterministic output, (III) to a mixture distribution with 
differing moments, and (IV) to a very restricted class of general distributions. 
We prove under which conditions the cases (I)-(IV) occur in certain classes vector 
fields.
\end{abstract}

\textbf{MSC Subject Classification:} Primary 34F05, 34E17; Secondary 60H25, 37H20\medskip

\textbf{Keywords:} Fast-slow systems, Hopf bifurcation, random initial condition, 
uncertainty propagation.\medskip

\section{Introduction}
 
Many mathematical models of complex systems contain an inherent element of uncertainty. 
From one perspective, it is a strength of theoretical models to abstract, simplify, 
and reduce a real system into a conceptual form. Modelling the neglected, unknown,
or different-scale processes can often be accomplished using probabilistic
models. The challenge is then to quantify uncertainty, i.e., to explain what effect 
random terms have in comparison to the purely deterministic system.

Here we study the scenario when we do not have exact information about the initial condition.
Suppose we model the initial condition as a random variable with a given distribution. Then the question 
is how the probability distribution is propagated by the dynamics? If the dynamical system 
contains an instability, e.g., a saddle-like structure in phase space, it is possible that 
a small random error in the initial condition can lead to widely different outcomes in the 
dynamics; see Figure~\ref{fig:01}(a). If all initial conditions in the 
distribution are attracted to a single stable attractor, then the randomness could probably have
been omitted in the first place; see Figure Figure~\ref{fig:01}(b). The critical cases are systems, 
which display transient and/or unstable behavior for a certain limited period of time in phase 
space; see Figure~\ref{fig:01}(c). This is the case considered in this paper.

\begin{figure}[htbp]
\psfrag{a}{(a)}
\psfrag{b}{(b)}
\psfrag{c}{(c)}
	\centering
		\includegraphics[width=0.75\textwidth]{./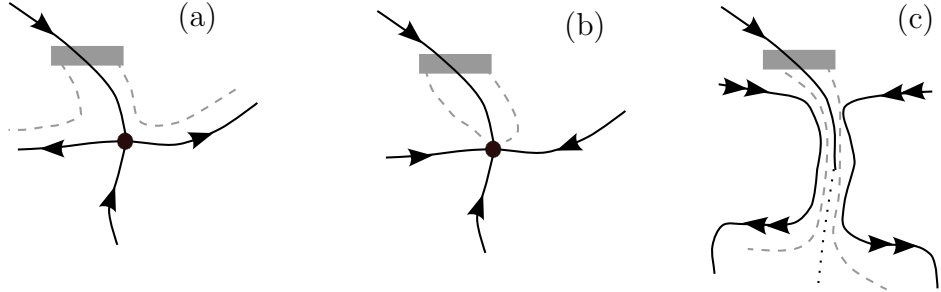}	
		\caption{\label{fig:01}Sketch of several situations including uncertain 
		initial conditions (grey rectangles). The flow is indicated with arrows, 
		steady states are marked as circles and propagation of certain 
		points in the set of potential initial conditions is given by dashed grey 
		curves. (a) Saddle steady state and exponentially diverging random initial 
		conditions. (b) Stable node and exponentially contracted random initial 
		conditions. (c) Transient instability.}
\end{figure} 

A passage near many instabilities is frequently modeled using multiple time scale 
dynamical systems. A fundamental subclass are fast-slow ordinary differential 
equations (ODEs). Slowly-drifting variables may bring the system towards an instability 
of certain fast variables. Near the instability an intricate interplay between 
the different classes of variables emerges. Detailed studies of many class of 
fast-slow bifurcation scenarios exist, see {e.g.}~\cite{DumortierRoussarie,KruSzm1,Schecter} 
and the extensive references in~\cite{KuehnBook}. The topic is 
sometimes referred to as 'dynamic bifurcation' or 'delayed bifurcation' and has a long 
history~\cite{Haberman,Shishkova,Su,DeMaesschalck,Benoit8,FruchardSchaefke}; for the 
delayed Hopf case considered in this
paper see~\cite{BaerErneuxRinzel,BaerGaekel,HoldenErneux,Neishtadt1,Neishtadt2,Stiefenhofer2}.

On the side of stochastic fast-slow systems the case of additive or multiplicative 
stochastic terms has been studied for multiscale stochastic ordinary differential 
equations (SODEs) from different perspectives~\cite{BerglundGentz,JansonsLythe} including 
bifurcation delay~\cite{Kuske,Kuske1,VandenBroeckMandel,ZeghlacheMandelVandenBroeck}; 
see~\cite[Sec.15.10]{KuehnBook} for more detailed references. The case of multiscale 
random ordinary differential equations (RODEs) has been explored a lot less up to now. 
Regarding delayed bifurcations of RODEs, a particular model case arising in pattern 
formation~\cite{ChenKololnikovTzouGai,TzouWardKolokolnikov} seems to be the first study. 

Here we concentrate on the abstract theory of delayed Hopf bifurcation for RODEs.
The Hopf case is definitely among the most interesting cases for bifurcation delay 
(see the review of results in Section~\ref{sec:detHopf} and the references mentioned above). 
The results of this work relate an input distribution $\mu_0$ of initial conditions to an 
output distribution $\mu_*$. We briefly state the conclusions in non-technical terms:

\begin{itemize}
 \item[(I)] The initial distribution $\mu_0$ can be transformed via only a 
 restricted set within the class of orthogonal transformations; certain reflections 
 are allowed (Theorem~\ref{thm:symmetry}) while general translations cannot occur 
 (Theorem~\ref{thm:shift}).
 \item[(II)] Given $\mu_0$ there exists a vector field mapping it to a 
 prescribed delta-distribution in a singular limit (Theorem~\ref{thm:buffer1})
 and the delta-distribution deforms to an approximate identity (Corollary~\ref{cor:det}).
 \item [(III)] For large classes of given real-analytic vector fields we obtain
 mixture measures for $\mu_*$ (Theorem~\ref{thm:mix}); the moments of $\mu_*$
 are computable in many cases (Proposition~\ref{prop:uniform}) and 
 (Proposition~\ref{prop:exp}).
 \item[(IV)] For general given $\mu_0$ and $\mu_*$, there is no
 real-analytic vector field which maps $\mu_0$ to $\mu_*$ under delayed 
 Hopf bifurcation (Theorem~\ref{thm:matrewrite} and 
 Proposition~\ref{prop:impossible}).
\end{itemize}  

In summary, we have shown that the problem of propagating uncertainty through
regions with bifurcations displays interesting behaviour, even for the simple
case of random initial condition and the codimension-one Hopf bifurcation. Although
there are many computational studies and approaches via inverse problems to
uncertainty quantification, the route via instabilities and multiscale bifurcation
normal form theory seems to be a wide-open direction for future work. 

\section{Deterministic Delayed Hopf Bifurcation}
\label{sec:detHopf}

We review basic results about deterministic delayed Hopf bifurcation to fix the 
notation and the setup. Consider a compact interval $\cI:=[0,\varepsilon_*]$ for 
some sufficiently small $\varepsilon_*>0$. Let $\varepsilon>0$, 
$\varepsilon\in \cI$, be a parameter representing the time scale 
separation. Consider the three-dimensional \emph{fast-slow system}
\be
\label{eq:fs}
\begin{array}{ccccl}
\varepsilon \frac{\txtd x_1}{\txtd \tau} &=& \varepsilon \dot{x}_1&=& f_1(x_1,x_2,y,\varepsilon),\\
\varepsilon \frac{\txtd x_2}{\txtd \tau} &=& \varepsilon \dot{x}_2&=& f_2(x_1,x_2,y,\varepsilon),\\
\frac{\txtd y}{\txtd \tau} &=& \dot{y}&=& g(x_1,x_2,y,\varepsilon),\\
\end{array}
\ee
where $f=(f_1,f_2)^\top:\R^3\times \cI\ra \R^2$, $g:\R^3\times \cI\ra \R$ are maps 
in a suitable function space $\cX$, $x=(x_1,x_2)^\top\in\R^2$ are the \emph{fast variables} 
and $y\in\R^1$ is the \emph{slow variable}. We also refer to \eqref{eq:fs} as a $(2,1)$-fast-slow 
system due the dimensions of the sets of variables. We restrict the analysis to 
suitable subsets of phase space with $x\in \cK_x\subset \R^2$, $y\in \cK_y\subset \R$,
where $\cK_x$ will always be compact.
We are going to need $\cX=C^k$ for some $k\in\N$ with $k\geq 3$, or $\cX=C^\I$, 
or $\cX=C^{\alpha}$ (real-analytic maps), depending on the setup; to avoid confusion 
with the probabilistic use of $\omega$ as an element of a sample space $\Omega$ we use 
the notation $\alpha$ as a superscript for real-analytic maps. In the notation we omit 
domain and range for function spaces, e.g., $f,g\in C^k$ is interpreted as 
$f\in C^k(\cK_x\times \cK_y\times \cI, \R^2)$ and in addition 
$g\in C^k(\cK_x\times \cK_y\times \cI, \R^1)$.

The system \eqref{eq:fs} is written on  the \emph{slow time scale} $\tau$ and can be 
re-written equivalently on the \emph{fast time scale} $t:=\tau/\varepsilon$. The 
\emph{critical manifold} of \eqref{eq:fs} is
\be
\cC_0:=\{(x,y)\in\cK_x\times \cK_y\subset \R^3:f(x,y,0)=0\}.
\ee 
$\cC_0$ can also be viewed as the algebraic constraint of the differential-algebraic \emph{slow 
subsystem} obtained from \eqref{eq:fs} by taking the singular limit $\varepsilon\ra 0$ which yields 
\be
\label{eq:ss}
\begin{array}{ccl}
0&=& f(x_1,x_2,y,0),\\
\dot{y}&=& g(x_1,x_2,y,0).\\
\end{array}
\ee
Alternatively, one may view $\cC_0$ as equilibrium points of the \emph{fast subsystem}
\be
\label{eq:fss}
\begin{array}{clccl}
\frac{\txtd x}{\txtd t}&=& x'&=&f(x_1,x_2,y,0),\\
\frac{\txtd y}{\txtd t}&=& y'&=& 0 .\\
\end{array}
\ee
obtained as a singular limit from \eqref{eq:fs} on the time scale $t$. For a more detailed 
introduction to multiple time scale dynamics see~\cite{KuehnBook}. The main assumptions 
for a generic \emph{delayed} (or \emph{dynamic}) \emph{Hopf bifurcation} to occur 
in~\eqref{eq:fs} are:

\begin{itemize}
 \item[(A1)] $\cC_0$ is a real-analytic one-dimensional curve and we 
 assume wlog that $\cC_0=\{(x,y)\in\cK_x\times\cK_y:x_1=0,x_2=0\}$;
 \item[(A2)] $\cC_0$ is \emph{normally hyperbolic} except at a single point, which we take without 
 loss of generality to be the origin $0:=(0,0,0)^\top \in\R^3$; more precisely, the matrix 
 $[\txtD_x f](p,0)=:J(p)\in\R^{2\times 2}$ has eigenvalues 
 $\lambda_1(p)=a_1(p)-\txti b_1(p),\lambda_2(p)=a_2(p)+\txti b_2(p)$ such that 
 $a_{1,2}(p)\neq 0$ for every $p\neq 0$, $p\in \cC_0$;
 \item[(A3)] at $p=0$ the fast subsystem has a Hopf bifurcation, {i.e.}, $\lambda_{1,2}(0)$
 is a complex conjugate pair of eigenvalues with nonzero imaginary part and we assume wlog 
 that $b_1(0)>0$ and $\textnormal{sign}(a_{1,2}(x_1,x_2,y))=\textnormal{sign}(y)$;
 \item[(A4)] the fast subsystem Hopf bifurcation at $p=0$ is generic, i.e., 
 $\frac{\txtd a_{1,2}}{\txtd y}(0)\neq 0$ and the first Lyapunov coefficient is nonzero;
 \item[(A5)] $g(0,0,y,0)\geq g_0>0$ for all $y\in \cK_y$ and some constant $g_0>0$.      
\end{itemize}

By (A1), we may write the slow subsystem \eqref{eq:ss} as 
\be
\label{eq:ss1}
\dot{y}=g(0,0,y,0).
\ee  
Denote the solution of \eqref{eq:ss1} with initial condition $y(\tau_0)=:y_0$ by $\xi(\tau)$.
The assumption (A5) guarantees that a trajectory crosses from the negative $y$-axis to the positive
$y$-axis. Up to a time translation, we may always assume for each individual slow subsystem 
trajectory that $\xi(0)=0$.

\begin{figure}[htbp]
\psfrag{x}{$x_1$}
\psfrag{y}{$y$}
\psfrag{Ca}{$\cC_0^a$}
\psfrag{Cr}{$\cC_0^r$}
\psfrag{KxKy}{$\cK_x\times \cK_y$}
\psfrag{y0}{\small{$(x_0,y_0)$}}
\psfrag{y1}{\small{$(x_*,y_*)$}}
\psfrag{Oe}{$\cO(\varepsilon)$}
\psfrag{g}{$\gamma_\varepsilon$}
	\centering
		\includegraphics[width=0.75\textwidth]{./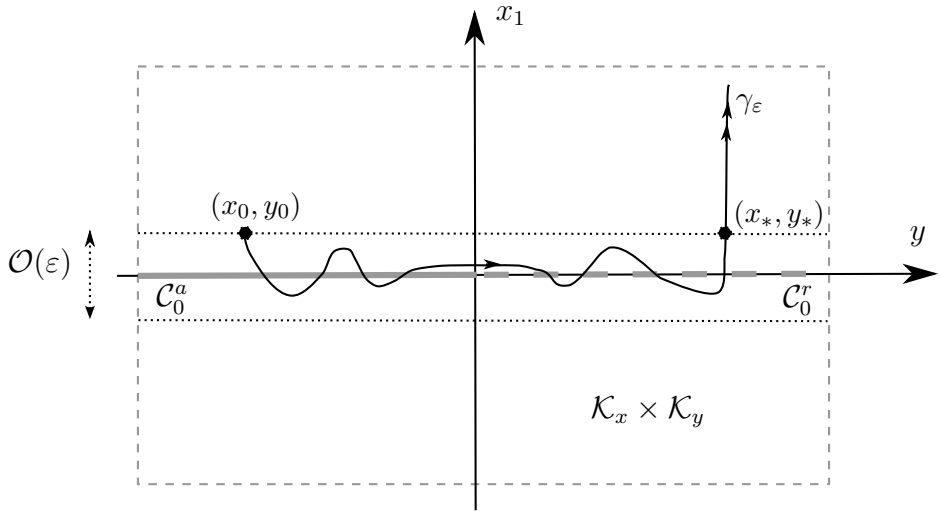}	
		\caption{\label{fig:02}Sketch of situation for a delayed Hopf bifurcation. 
		Projection onto $(y,x_1)$-coordinates. The domain $\cK_x\times \cK_y$ is 
		indicated as a dashed rectangle. A trajectory $\gamma_\varepsilon$ is
		shown (black curve) getting first attracted to and then repelled from $\cC_0$. 
		The initial condition $(x_0,y_0)=(x(\tau_0),y(\tau_0))$ is chosen so $\tau_0$ 
		is an asymptotic moment of falling while $(x_*,y_*)=(x(\tau_*),y(\tau_*))$ marks 
		the trajectory location for the asymptotic moment of jumping 
		$\tau_*$, where $\tau_*$ is also the delay time.}
\end{figure} 

Trajectories $\gamma_\varepsilon$ of the \emph{full fast-slow system} \eqref{eq:fs} with initial
conditions $y_0<0$, $y_0=\cO(1)$ as $\varepsilon\ra 0$, starting sufficiently close to the $y$-axis 
first get attracted towards 
\benn
\cC^a_0:=\{(x,y)\in\cK_x\times\cK_y:y<0\}\cap \cC_0. 
\eenn
Indeed, $\cC^a_0$ is normally hyperbolic \emph{attracting} since by (A3) we have negative real parts of the 
eigenvalues for the linearization, so Fenichel's Theorem \cite{Fenichel4,Jones} guarantees that 
there exists a \emph{slow manifold} $\cC^a_\varepsilon$ which is $\cO(\varepsilon)$-close to $\cC^a_0$ and 
the flow on $\cC^a_\varepsilon$ converges to the flow on $\cC^a_0$ as $\varepsilon\ra 0$; see also 
Figure~\ref{fig:02}. By~(A5), the slow dynamics on $\cC^a_\varepsilon$ guarantees that $\gamma_\varepsilon$ 
approaches a neighbourhood of the origin. The behavior of $\gamma_\varepsilon$ once it passes the Hopf 
bifurcation point and is near the \emph{repelling} part 
\benn
\cC_0^r:=\{(x,y)\in\cK_x\times\cK_y:y>0\}\cap \cC_0
\eenn
of the critical manifold is characterized by the following classical result:

\begin{thm}[\cite{Neishtadt1,Neishtadt2}]
\label{thm:neithm1}
Suppose (A1)-(A5) hold. Fix an initial time $\tau_0<0$. Assume 
$(x_1(\tau_0),x_2(\tau_0),y(\tau_0))$ is $\cO(\varepsilon)$-close to $\cC^a_0$ with associated 
trajectory $\gamma_\varepsilon(\tau)$. Then, there exists $\varepsilon_*>0$ 
such that for all $\varepsilon\in(0,\varepsilon_*]$, $\gamma_\varepsilon$ is in an 
$\cO(\varepsilon)$-neighborhood of $\cC^r_0$ for a delay time beyond the bifurcation point 
at $y=0$.
\begin{itemize}
 \item[(D1)] Suppose $f,g\in C^{\alpha}$ with complex analytic continuations in 
 the $(x,y)$-variables in a neighborhood of the origin remaining smooth with respect to $\varepsilon$. 
 Then $\gamma_\varepsilon$ has a delay time $\tau_*>0$ where $\tau_*=\cO(1)$ as $\varepsilon\ra 0$.
 \item[(D2)] Suppose $f,g\in C^\I$. Then the generic delay time is 
 $\sqrt{M(\varepsilon)\varepsilon|\ln\varepsilon|}$ where $M(\varepsilon)\ra +\I$ monotonically 
 as $\varepsilon\ra 0$.
 \item[(D3)] Suppose $f,g\in C^l$ for $l<\I$. Then the generic delay 
 time is of the order $\cO(\varepsilon|\ln\varepsilon|)$ as $\varepsilon\ra 0$.    
\end{itemize}  
\end{thm}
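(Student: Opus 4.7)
The plan is to straighten the slow manifold, diagonalize the fast block into a complex-scalar form, and then reduce the theorem to an entry-exit balance whose sharpness depends directly on how well the slow manifold can be approximated in the given smoothness class.

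First I would exploit (A1)-(A4) to perform a sequence of near-identity coordinate changes in a neighbourhood of the origin. By (A1) the critical manifold is already the $y$-axis, and by (A2)-(A3) the matrix $J(p)=[\txtD_x f](p,0)$ has complex conjugate eigenvalues $a(y)\pm\txti b(y)$ with $b(0)>0$ and $\textnormal{sign}(a(y))=\textnormal{sign}(y)$. Diagonalizing the linear part and setting $z:=x_1+\txti x_2$ (up to a smooth change) rewrites the fast-slow system in the complex-scalar form
\be
\varepsilon\dot z = \lambda(y,\varepsilon)z + R(z,\bar z,y,\varepsilon),\qquad \dot y=g(0,0,y,0)+\cO(|z|,\varepsilon),
\ee
where $\lambda(y,0)=a(y)+\txti b(y)$ and $R=\cO(|z|^2)$. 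Fenichel's theorem gives an attracting slow manifold $\cC^a_\varepsilon\subset\{y<0\}$ which is $\cO(\varepsilon)$-close to the $y$-axis; by a further smooth straightening, one may take $\cC^a_\varepsilon=\{z=0\}$ for $y\leq -\delta<0$. The slow dynamics on $\cC^a_\varepsilon$ inherits $\dot y\geq g_0>0$ from (A5), so the trajectory $\gamma_\varepsilon$ is carried monotonically through $y=0$.

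The core of the argument is the entry-exit estimate for the linearized equation. Using $y\approx\xi(\tau)$ along $\gamma_\varepsilon$, the leading-order solution is
\be
z(\tau)\approx z(\tau_0)\,\exp\!\left(\frac{1}{\varepsilon}\int_{\tau_0}^\tau \lambda(\xi(s),0)\,\txtd s\right),
\ee
so $|z(\tau)|$ first contracts while $\textnormal{Re}\int a<0$ and then grows once this integral turns positive. The trajectory remains in an $\cO(\varepsilon)$-neighbourhood of $\cC_0$ as long as $|z(\tau)|=\cO(\varepsilon)$, and the delay time $\tau_*$ is characterised by the balance
\be
\int_{\tau_0}^{\tau_*} a(\xi(s))\,\txtd s \;\approx\; \varepsilon\,\ln\!\left(\frac{\varepsilon}{|z(\tau_0)|}\right).
\ee
Which of (D1)-(D3) applies is then decided entirely by the best achievable size of the initial deviation $|z(\tau_0)|$ from the extended slow manifold.

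\textbf{Case (D1).} This is the crux and the main obstacle. Under analyticity, I would follow Neishtadt's complex-time continuation: $\lambda$ extends holomorphically in $y$, and the slow manifold $\cC^a_\varepsilon$ admits an analytic continuation to a complex neighbourhood that skirts the Hopf point along a suitable Stokes contour. Along such a path the linearisation captures $z$ to order $\exp(-c/\varepsilon)$, so effectively $|z(\tau_0)|$ is exponentially small, and the balance above yields a delay with $\int_{\tau_0}^{\tau_*}a\,\txtd s=\cO(1)$, i.e.\ $\tau_*=\cO(1)$. The delicate point is controlling the nonlinear remainder $R$ along the complex contour up to the first singularity of $\lambda(\cdot)$, which forces the choice of integration path and dictates the maximal delay. \textbf{Case (D2).} In $C^\infty$ one only has Borel-type asymptotic slow manifolds; the best super-polynomial bound on $|z(\tau_0)|$ is of the form $\exp(-M(\varepsilon)/\sqrt{\varepsilon|\ln\varepsilon|})$ for some $M(\varepsilon)\to\infty$, and substitution into the balance gives $\tau_*=\sqrt{M(\varepsilon)\varepsilon|\ln\varepsilon|}$. \textbf{Case (D3).} In $C^l$ the slow manifold is only $\cO(\varepsilon^l)$-accurate by iterating the invariance equation $l$ times, so $|z(\tau_0)|=\cO(\varepsilon^l)$, and the balance forces $\int_{\tau_0}^{\tau_*} a\,\txtd s=\cO(\varepsilon|\ln\varepsilon|)$, giving $\tau_*=\cO(\varepsilon|\ln\varepsilon|)$ since $a$ is bounded away from zero just past the bifurcation. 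In all three cases one finally upgrades the linearised estimates to the full nonlinear trajectory by a Gronwall argument in the region where $|z|=\cO(\varepsilon)$, which closes the proof.
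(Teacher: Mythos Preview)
The paper does not prove this theorem at all: it is quoted verbatim as a classical background result, attributed to Neishtadt via the citation \texttt{[Neishtadt1,Neishtadt2]}, and the authors move on immediately to the next statement without any argument. So there is no ``paper's own proof'' to compare your proposal against; your sketch is effectively a summary of the ideas behind Neishtadt's original argument rather than a parallel to anything in this manuscript.

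As a sketch of Neishtadt's method your outline is broadly on target---complex-scalar reduction, entry/exit balance for $\textnormal{Re}\,\Psi$, complex-time continuation for the analytic case, and slow-manifold approximation order governing the short-delay cases. Two small points worth tightening if you ever write this out in full: (i) in (D2) the mechanism is more naturally phrased as $|z(\tau_0)|=\cO(\varepsilon^{M(\varepsilon)})$ with $M(\varepsilon)\to\infty$ (optimal truncation of the asymptotic slow-manifold expansion), which after using $a(\xi(s))\sim a'(0)s$ near the Hopf point gives $\tau_*^2\sim \varepsilon M(\varepsilon)|\ln\varepsilon|$ directly---your exponential form does not quite reproduce the stated scale; and (ii) in (D1) the real work is not a Gronwall closure but the construction of the analytic slow manifold along a complex contour avoiding the turning point, together with control of where (B1)--(B3) first fail, which is what produces the buffer time in Theorem~\ref{thm:neithm2}. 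But again, none of this appears in the present paper.
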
    

The important distinction between cases (D1) and (D2)-(D3) is that a long delay is 
observed in the analytic case and a short delay in the remaining cases. The genericity 
requirement in cases (D2)-(D3) is necessary to guarantee that $\{y=0\}$ is no longer an 
invariant manifold for the full system when $\varepsilon>0$ and we shall make this assumption
from now on:
\begin{itemize}
 \item[(A6)] $\cC_0$ is not an invariant manifold for $\varepsilon\in(0,\varepsilon_*]$.
\end{itemize}
Furthermore, we are going to assume that $\cO(\cdot)$ is with respect to $\varepsilon$ and 
omit $\varepsilon\ra 0$ from now on.

For case (D1), calculating $\tau_*$ splits into two further cases. $\tau_*$ is called 
an \emph{asymptotic moment of jumping} of $\gamma_\varepsilon$ if in an 
$\cO(\varepsilon|\ln\varepsilon|)$-neighborhood of $\tau$, there is an interval 
$[\tau_a,\tau_b]$ such that $\gamma_\varepsilon(\tau_a)$ is $\cO(\varepsilon)$-close 
to $\cC_0$ and $\gamma(\tau_b)$ is $\cO(1)$ separated from $\cC_0$. 
$\tau_*$ is called an \emph{asymptotic moment of falling} if it is an asymptotic 
moment of jumping upon reversing time. Define the \emph{complex phase}
\benn
\Psi(\tau):=\int_{0}^{\tau} \lambda_1(\xi(s))~\txtd s.
\eenn 
Following~\cite{Neishtadt1,Neishtadt2}, we define the \emph{entry/exit-map} 
$\Pi:(-\I,0]\ra [0,+\I)$ by the requirement
\be
\label{eq:wayinwayout}
\text{Re}[\Psi(\tau)]=\text{Re}[\Psi(\Pi(\tau))].
\ee
Extending the domain of $\tau$ from $\R$ to $\C$, the pairs $(\tau,\Pi(\tau))$ can be 
connected by arcs 
\benn
\cL_k=\{\tau\in\C|\textnormal{ Re}[\Psi(\tau)]=k\}\subset \C.
\eenn
which are level sets of $\textnormal{Re}[\Psi(\tau)]$ for a given real number $k\in \R$. 
$\textnormal{Re}[\Psi(\tau)]$ is sometimes called the \emph{relief function}. Near $\tau=0$ the 
following conditions hold as consequences of~(A1)-(A5):

\begin{itemize}
 \item[(B1)] $\xi$ is analytic and $f_1,f_2,g$ are analytic at points of the slow flow solution;
 \item[(B2)] $\lambda_{1,2}(\xi(\tau))\neq 0$ and $\lambda_1(\xi(\tau))\neq \lambda_2(\xi(\tau))$;
 \item[(B3)] no tangent to the curves $\cL_k$ is vertical.
\end{itemize} 

(B1)-(B3) can fail far away from the Hopf bifurcation of the fast subsystem. Let 
$\tau^-$ and $\tau^+$ be the lower and upper bounds of endpoints of arcs $\cL_k$ for which 
(B1)-(B3) hold. Let $\Gamma$ be the arc starting at $\tau^-$ and ending at $\tau^+$ 
on which $\textnormal{Re}(\Psi(\tau))$ is constant. Denote the domain in the complex $z$-plane 
bounded by $\Gamma$ and its conjugate arc $\overline{\Gamma}$ by $\cG$.

\begin{thm}[\cite{Neishtadt1,Neishtadt2}]
\label{thm:neithm2}
Suppose $\tau_0\in(\tau^-,\tau^+)$ is an asymptotic moment of falling. Then $\Pi(\tau_0)$ 
is an asymptotic moment of jumping, and on the interval
\benn
(\tau_0+K\varepsilon|\ln\varepsilon|,\Pi(\tau_0)-K\varepsilon|\ln\varepsilon|),\quad  
\text{for some fixed constant $K>0$}
\eenn
the solution is $\cO(\varepsilon)$-close to $\cC_0$. If $\tau_0<\tau^-$ then the solution 
generically remains $\cO(\varepsilon)$-close to $\cC_0$ until $\tau<\tau^+-\delta(\varepsilon)$ 
where $\delta(\varepsilon)\ra 0$ as $\varepsilon\ra 0$. 
\end{thm}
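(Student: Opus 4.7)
The plan is to reduce the analysis to an amplitude equation for the complexified fast variable $z:=x_1+\txti x_2$ and then exploit analytic continuation in time, with the level sets $\cL_k$ of $\textnormal{Re}[\Psi]$ serving as the bridge between entry and exit. After using (A3) to put the fast subsystem into complex normal form near the origin, the system \eqref{eq:fs} can be written (after a near-identity change of coordinates and a time rescaling) as
\be
\varepsilon \dot{z} = \lambda_1(\xi(\tau))\, z + \varepsilon R_1(z,\bar z,\tau,\varepsilon) + N(z,\bar z,\tau),
\ee
where $N$ contains the nonlinear (at least cubic) terms and $R_1$ bundles the $\cO(\varepsilon)$ drift of the slow variable along the trajectory $\xi$. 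For solutions already $\cO(\varepsilon)$-close to $\cC_0^a$ at time $\tau_0$, the linear part dominates, and the formal leading-order solution is
\be
z(\tau) \;\approx\; z(\tau_0)\,\exp\!\Big(\tfrac{1}{\varepsilon}\big(\Psi(\tau)-\Psi(\tau_0)\big)\Big),
\ee
so its modulus is governed by $\exp(\varepsilon^{-1}\textnormal{Re}[\Psi(\tau)-\Psi(\tau_0)])$.

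The core step is to promote this formal estimate to a rigorous bound by analytic continuation in $\tau$. Using (B1)--(B2), the vector field and the phase $\Psi$ extend holomorphically into a neighborhood of $\cG$, and using (B3) the level curves $\cL_k$ have nowhere vertical tangents, so one can deform the contour of integration along the $\cL_k$ connecting $\tau_0$ to $\Pi(\tau_0)$. The idea is to apply a Gronwall-type argument to the (complexified) variation-of-constants formula along such a path: since $\textnormal{Re}[\Psi]$ is constant on each $\cL_k$ and the nonlinear terms are at least quadratic in a quantity that itself stays exponentially small, the deviation of $z$ from $\cO(\varepsilon)$ can be bounded by $\cO(\varepsilon)\exp(\varepsilon^{-1}\textnormal{Re}[\Psi(\tau)-\Psi(\tau_0)])$. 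This yields $|z(\tau)|=\cO(\varepsilon)$ whenever $\textnormal{Re}[\Psi(\tau)]\leq \textnormal{Re}[\Psi(\tau_0)] - K_0\varepsilon|\ln\varepsilon|$ for a suitable $K_0>0$, which translates via the local Lipschitz behavior of $\textnormal{Re}\,\Psi$ to the quoted window $(\tau_0+K\varepsilon|\ln\varepsilon|,\Pi(\tau_0)-K\varepsilon|\ln\varepsilon|)$. That $\Pi(\tau_0)$ is an asymptotic moment of jumping then follows directly from the defining matching relation \eqref{eq:wayinwayout}: beyond $\Pi(\tau_0)$ the exponent turns positive, the linearization amplifies the $\cO(\varepsilon)$ perturbation to $\cO(1)$ on an $\cO(\varepsilon|\ln\varepsilon|)$ time scale, and the nonlinear terms $N$ (whose sign is controlled by (A4) via the first Lyapunov coefficient) guarantee escape to $\cO(1)$-distance from $\cC_0$.

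For the second statement, when $\tau_0<\tau^-$ the matching relation \eqref{eq:wayinwayout} does not produce a finite image because the arc $\Gamma$ has already been reached. One argues that the trajectory remains $\cO(\varepsilon)$-close to $\cC_0$ for all $\tau$ with $\textnormal{Re}[\Psi(\tau)]<\textnormal{Re}[\Psi(\tau^-)]$, i.e.\ up to time $\tau^+-\delta(\varepsilon)$, by the same contour-deformation estimate along the admissible portion of $\cG$. Genericity, enforced by (A6), is what rules out the solution being pinned exactly to $\cC_0$ past $\tau^+$: if $\cC_0$ were invariant, the orbit would slide along it indefinitely and $\delta(\varepsilon)$ would not be forced to zero; non-invariance produces an exponentially small but non-zero separation that eventually becomes $\cO(1)$ just beyond $\tau^+$, giving $\delta(\varepsilon)\to 0$ as $\varepsilon\to 0$.

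The main obstacle is the rigorous control of the analytic continuation and the accompanying Gronwall argument on contours in $\C$: one must verify that the deformation from the real line to a path in $\cG$ does not encounter singularities of $\lambda_{1,2}\circ\xi$ (ensured by (B1)--(B2)), that the path can be taken with bounded length in a neighborhood of scale $\cO(\sqrt{\varepsilon})$ of the real axis where the nonlinear remainders remain negligible, and that the unavoidable $\cO(\varepsilon|\ln\varepsilon|)$ logarithmic losses at the endpoints arise only from the passage through the saddle-type geometry of $\textnormal{Re}[\Psi]$ at $\tau=0$. Once these estimates are in place, the identification of $\Pi(\tau_0)$ as the asymptotic moment of jumping and the generic bound via $\delta(\varepsilon)$ follow with bookkeeping.
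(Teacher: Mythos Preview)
The paper does not prove Theorem~\ref{thm:neithm2}; it is stated as a classical result of Neishtadt and cited to \cite{Neishtadt1,Neishtadt2} in the review Section~\ref{sec:detHopf}, with no argument given beyond the surrounding setup of (A1)--(A6) and (B1)--(B3). So there is nothing in the paper to compare your proposal against.

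That said, your sketch is a faithful outline of Neishtadt's actual method: complexify the fast variable, extend $\Psi$ and the vector field analytically into the domain $\cG$, deform the integration contour along the level curves $\cL_k$ of $\textnormal{Re}[\Psi]$ (which is exactly why (B1)--(B3) are imposed), and run a Gronwall/variation-of-constants estimate on the deformed path to obtain the $\cO(\varepsilon)$ bound on the stated interval with $\cO(\varepsilon|\ln\varepsilon|)$ endpoint losses. Your identification of the buffer-point case with the boundary arc $\Gamma$ and of the role of (A6) in preventing the trajectory from riding $\cC_0$ indefinitely is also correct. The parts you flag as ``main obstacle'' (controlling the nonlinear remainder uniformly along complex contours, bounding contour length, and tracking the logarithmic loss near the saddle of $\textnormal{Re}[\Psi]$) are precisely the technical core of Neishtadt's papers and are not trivial bookkeeping; a full proof requires the detailed estimates carried out there, so your write-up should be understood as a proof \emph{plan} referencing \cite{Neishtadt1,Neishtadt2} rather than a self-contained argument.
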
 

The time $\tau^+$ is called the \emph{buffer time} and $\xi(\tau^+)$ is the \emph{buffer 
point}. Theorem~\ref{thm:neithm2} states that there are two cases: either the integrated 
linearized variational contraction and expansion balance to determine the asymptotic 
moment of jumping, or all trajectories leave near the buffer point.

\section{Random Delayed Hopf Bifurcation} 
  
\subsection{Basic Setup}

Let $(\Omega,\cF,\P)$ be a probability space. Consider the random (2,1)-fast-slow system
\be
\label{eq:RDE}
\begin{array}{rcl}
\varepsilon \dot{x} & = & f(x,y,\varepsilon),\\
\dot{y} & = & g(x,y,\varepsilon),\\
\end{array}
\ee
with initial condition $(x(\tau_0),y(\tau_0))=(x_0(\omega),y_0(\omega))$. We are going 
to use as a solution concept for~\eqref{eq:RDE} \emph{sample function solutions}~\cite{Strand}. 
Suppose \eqref{eq:RDE} has a  delayed Hopf bifurcation satisfying 
assumptions (A1)-(A6) for every $\omega\in\Omega$. Next, one may divide the vector field by
$g$ due to the assumption (A5) and re-scale time to obtain
\be
\label{eq:RDE1}
\begin{array}{rcl}
\varepsilon \dot{x} & = & \tilde{f}(x,y,\varepsilon),\\
\dot{y} & = & 1,\\
\end{array}
\ee
where we are going to drop the tilde from $f$ in this section and focus on studying the
system~\eqref{eq:RDE1} satisfying (A1)-(A6) with initial condition 
$(x(\tau_0),y(\tau_0))=(x_0(\omega),y_0(\omega))$. This makes the slow subsystem
particularly simple so
\be
\label{eq:slow_simple}
\dot{y}=1, \quad \Rightarrow \quad y(\tau)=(\tau-\tau_0)+y_0(\omega).
\ee
We make the standard assumption that for each individual trajectory of the slow subsystem we require
$\xi(0)=0$, which implies that $\tau_0=\tau(\omega)$ becomes a random variable 
with the same distribution as $y_0(\omega)$. Of course, we can calculate from the 
distribution of the asymptotic moment of jumping $\tau_*(\omega)=\Pi(\tau(\omega))$ 
the distribution of $y_*(\omega):=y(\tau_*(\omega))$ just using~\eqref{eq:slow_simple}.

As a first step, we are only interested in the dynamic bifurcation effect in the slow 
coordinate $y$ and not in the precise location of the (oscillatory) fast variables $x$. 
So we take $(x(0),y(0))=(x_0,y_0(\omega))$ with
\be
\label{eq:mu0supp}
\P(y\leq y_0\leq y+\txtd y)=\mu_0(y_0),\qquad \textnormal{supp}(\mu_0)\subset (-\I,-\kappa_\mu],
\ee  
where $\mu_0$ is a probability measure and $\kappa_\mu>0$ is some given sufficiently 
small constant as we are not interested in initial conditions that do not undergo at 
least a certain delay. In particular, we can always make $\varepsilon$ sufficiently 
small to ensure that $x_0$ is not only deterministic but we also have $x(t)=\cO(\varepsilon)$ 
after a short transient time $t$ since $\cC_0^a$ is globally attracting for each fixed $y<0$. 
Therefore, we shall just assume $x_0=\cO(\varepsilon)$ and $(x_0,y_0(\omega))\not\in 
\cC_\varepsilon^a$ directly; see Figure~\ref{fig:02}. If $\mu_0$ admits a probability 
density, then we denote the density by $p_0$. The probability measure associated to 
$\tau_*$ will be denoted by $\mu_*$ and if it has a density we call it $p_*$.

\subsection{Orthogonal Transformation of Uncertainty}
\label{ssec:rigid}

The first situation we are going to analyze is what could be called simple passage (or orthogonal 
transformation) of uncertainty, i.e., of the probability density of initial conditions under delayed 
bifurcation by reflection and/or translation. 

\begin{thm}
\label{thm:symmetry}
Suppose $\mu_0$ has compact support. Then there exists $f\in C^\alpha$ such 
that (A1)-(A6) hold and $\mu_*(y)=\mu_0(-y)$.    
\end{thm}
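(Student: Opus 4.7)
The goal is to exhibit a single real-analytic vector field $f$ whose induced entry/exit map on the real line is the reflection $\Pi(\tau) = -\tau$. Once that is achieved, the reduced slow flow \eqref{eq:slow_simple} together with Theorem~\ref{thm:neithm2} gives $y_*(\omega) = \Pi(y_0(\omega)) = -y_0(\omega)$ up to an $\cO(\varepsilon|\ln\varepsilon|)$ error per sample path; passing to the singular limit, $\mu_*$ is the pushforward of $\mu_0$ by $y\mapsto -y$, which is precisely $\mu_0(-y)$. The key reduction is that $\Pi(\tau) = -\tau$ is equivalent to $\textnormal{Re}[\Psi]$ being even on $\R$, and since $\xi(s) = s$ by \eqref{eq:slow_simple} we have $\textnormal{Re}[\Psi(\tau)] = \int_0^\tau a_1(s)\,\txtd s$, which is even in $\tau$ iff $a_1(y)$ is an odd function of $y$. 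This is entirely compatible with (A3), which already forces $a_1$ to change sign at $y = 0$.

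I would then take the explicit real-analytic Hopf normal form
\begin{equation*}
f(x,y,\varepsilon) \;=\; \begin{pmatrix} y x_1 - \beta x_2 - x_1(x_1^2+x_2^2) + \varepsilon \\[2pt] \beta x_1 + y x_2 - x_2(x_1^2+x_2^2) \end{pmatrix}, \qquad g \equiv 1,
\end{equation*}
with a frequency $\beta>0$ to be chosen below. Assumptions (A1)-(A6) are routine to verify: $\cC_0 = \{x_1=x_2=0\}$; the fast-subsystem linearisation has eigenvalues $\lambda_{1,2}(y) = y \pm \txti\beta$, so $\txtd a_{1,2}/\txtd y(0) = 1$ and the cubic terms give a nonzero first Lyapunov coefficient; (A5) is trivial for $g\equiv 1$; and the $+\varepsilon$ summand destroys invariance of $\cC_0$ for $\varepsilon > 0$, yielding (A6). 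Integration gives $\Psi(\tau) = \tau^2/2 - \txti\beta\tau$, hence $\textnormal{Re}[\Psi(\tau)] = \tau^2/2$ on $\R$, which is manifestly even.

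The only real obstacle is ensuring that $\textnormal{supp}(\mu_0)$ lies inside the good interval $(\tau^-, \tau^+)$ rather than in the buffer regime of Theorem~\ref{thm:neithm2}, where all sample trajectories would exit near a common buffer time and destroy the reflection symmetry. The gradient of $\textnormal{Re}[\Psi(u+\txti v)] = (u^2 - v^2)/2 + \beta v$ equals $(u,\, \beta - v)$, so a level curve $\cL_k$ carries a vertical tangent only at $v = \beta$, and this meets $\cL_{\tau_0^2/2}$ iff $|\tau_0| > \beta$. Setting $R := \sup\{|y_0| : y_0 \in \textnormal{supp}(\mu_0)\}$, which is finite by compactness, and choosing $\beta > R$ therefore makes (B1)-(B3) hold along the upper half-plane arc of $\cL_{\tau_0^2/2}$ joining $(\tau_0,0)$ to $(-\tau_0,0)$ for every $\tau_0 \in \textnormal{supp}(\mu_0)$; in particular $\textnormal{supp}(\mu_0) \subset (\tau^-,\tau^+) = (-\beta,\beta)$. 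Theorem~\ref{thm:neithm2} then applies sample-wise to give $y_*(\omega) = -y_0(\omega)$ in the singular limit, and the freedom to tune $\beta$ to $\mu_0$ is exactly the existential quantifier on $f$ in the statement.
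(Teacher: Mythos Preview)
Your proof is correct and follows essentially the same approach as the paper: both construct the standard supercritical Hopf normal form, compute $\textnormal{Re}[\Psi(u+\txti v)]$ explicitly, locate the vertical-tangency locus to identify the buffer time, and then tune a single parameter to push the buffer outside $\textnormal{supp}(\mu_0)$. The only cosmetic difference is that the paper inserts the free parameter as a coefficient $c$ on the real part of the eigenvalue (so $\tau^+=1/c$, made large by taking $c\downarrow 0$), whereas you put it on the imaginary part as a frequency $\beta$ (so $\tau^+=\beta$, made large by taking $\beta\uparrow\infty$); these are equivalent rescalings of the same construction.
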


\begin{proof}
Since $\mu_0$ has compact support it follows that we can restrict to $\cK_y$ 
compact. It suffices to find $f$ such that: (A1)-(A6) hold, the buffer time 
$\tau^+$ can be made large enough to move any buffer points outside 
$\cK_x\times \cK_y$, and $\tau_*=-\tau_0$. Consider 
\be
\label{eq:Hopf_nf}
f(x_1,x_2,y,\varepsilon)=\left(\begin{array}{l}cyx_1-x_2-x_1(x_1^2+x_2^2)                
\\ x_1+cyx_2-x_1(x_1^2+x_2^2)\\\end{array}\right)
    +\cO(\varepsilon)
\ee
for some constant $c>0$ to be chosen below, and select analytic higher-order 
$\cO(\varepsilon)$-terms such that $\cC_0\neq \cC_\varepsilon$ which yields (A6). 
Since $f$ is just to leading-order the normal form of a generic Hopf bifurcation 
with parameter $cy$, it easily follows that (A1)-(A4) 
are satisfied and (A5) trivially holds for $\dot{y}=1$. Next, we want to 
analyze the upper bounds on the buffer time imposed by the 
conditions (B1)-(B3). One calculates that $\lambda_{1,2}(s)=cs\mp\txti$ so 
\benn
\Psi(\tau)=\int_{0}^{\tau} cs-\txti~\txtd s=\frac{c}{2}\tau^2-\txti \tau.
\eenn  
Hence, letting $\tau=u+\txti v$ one gets 
\benn
\textnormal{Re}(\Psi(\tau))=\frac{c}{2}\left(u^2-v^2\right)+v:=U(u,v).
\eenn 
We start with the upper bound imposed by (B3). Vertical tangents to the 
level sets $\cL_k=\{U(u,v)=k\}$ appear if $\frac{\partial U}{\partial v}=-cv+1=0$, 
i.e., for $v=1/c$. Level curves connecting from the vertical tangency point 
$(u_1,1/c)$ to a point $(u_2,0)$ have to satisfy 
\benn
\frac{c}{2}\left(u_1^2-c^{-2}\right)+\frac1c=k,\qquad \text{and}\qquad 
\frac{c}{2}u_2^2=k.
\eenn
The level curves $\cL_k$ delimiting $\cG$ in the upper half-plane are given by 
$k=1/(2c)$ so $u_2=\pm 1/c$. This implies that (B3) yields an upper bound
on the buffer time given by $1/c$. Regarding (B2), it is easy to see that 
$\lambda_1\neq \lambda_2$ and $\lambda_1(s)=0$ if and only if $s=\txti/c$ or $v=1/c$. 
Therefore, (B2) leads to the same upper bound on the buffer time as the condition 
(B3). Lastly, (B1) does not yield any upper bound on $\tau^+$. Therefore, we find 
$\tau^+=1/c$ and we can move any buffer
point outside of a given compact region $\cK_x\times \cK_y$ upon decreasing $c$. 
Lastly, we have to check that $\tau_*=-\tau_0$. Again one may just calculate that
\benn
\textnormal{Re}(\Psi(\tau_0))=\textnormal{Re}(\Psi(\tau_*))\quad \Leftrightarrow\quad
\frac{c}{2}\tau_0^2=\frac{c}{2}\tau_*^2
\eenn
since the start and end times must be real-valued. Upon using that $\tau_*>\tau_0$ 
and $c>0$ we may conclude that $\tau_*=-\tau_0$.
\end{proof}

\begin{figure}[htbp]
\psfrag{a}{(a)}
\psfrag{b}{(b)}
\psfrag{py}{$\mu_{\cdot}(y)$}
\psfrag{y}{$y$}
\psfrag{m0}{$\mu_{0}$}
\psfrag{ms}{$\mu_*$}
\psfrag{m}{$m$}
	\centering
		\includegraphics[width=0.85\textwidth]{./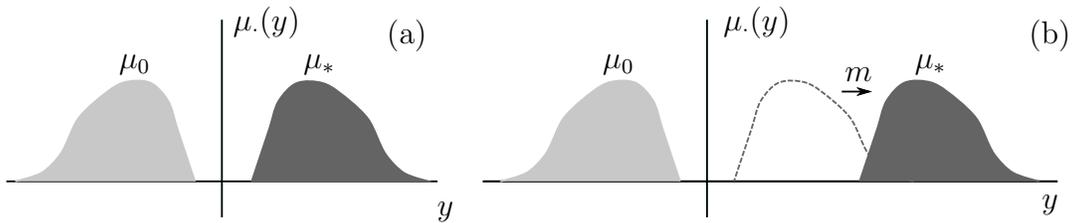}	
		\caption{\label{fig:03}Sketch of the situation for orthogonal 
		transformations. (a) Illustration of Theorem~\ref{thm:symmetry}
		where $\mu_*(y)=\mu_0(-y)$ is obtained via reflection. The initial
		density is shown in light grey and the transported one in dark
		grey. (b) The situation involving reflection (dashed density) and
		a nontrivial shift by $m\neq 0$ is not possible for analytic vector 
		fields according to Theorem~\ref{thm:shift}.}
\end{figure} 

The proof of Theorem~\ref{thm:symmetry} carries over verbatim if higher-order 
nonlinear perturbations are added to the Hopf normal form~\eqref{thm:symmetry}; 
see Figure~\ref{fig:03}(a) for an illustration.
However, not all natural transformations preserving the shape of the $\mu_0$ are 
allowed as the next result, quite surprisingly, shows.

\begin{thm}
\label{thm:shift}
Fix $m\in\R-\{0\}$, suppose $\mu_0$ has compact support 
$\textnormal{supp}(\mu_0)=\cI_\mu$ containing at least one accumulation point, 
and $\textnormal{supp}(\mu_0(\cdot+m))\subset(-\I,0)$. Then there does \emph{not} 
exist $f\in C^\alpha$ such that (A1)-(A6), (B1)-(B3) hold and 
\benn
\mu_*(y)=\mu_0(-y+m).  
\eenn
\end{thm}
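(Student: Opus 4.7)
The plan is to convert the distributional identity $\mu_*(y) = \mu_0(-y+m)$ into a pointwise identity for the entry/exit map $\Pi$, then extend it to a wider interval by real-analyticity, and finally reach a contradiction with the sign structure imposed by (A3). In the first step, the simple slow dynamics \eqref{eq:slow_simple} let us identify $\tau_0$ with $y_0$ and $\tau_*$ with $y_*$ as random variables, so the hypothesis reduces to an equality between two pushforward measures on $\R$: the pushforward of $\mu_0$ under $\Pi$ and the pushforward of $\mu_0$ under the affine map $y \mapsto -y+m$. Both maps are continuous and strictly decreasing on an interval containing $\cI_\mu$ (for $\Pi$, monotonicity follows from the sign structure of $a := \textnormal{Re}(\lambda_1)$ on either side of the bifurcation imposed by (A3)). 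A comparison of cumulative distribution functions restricted to $\textnormal{supp}(\mu_0)$ then forces the pointwise identity $\Pi(\tau_0) = m - \tau_0$ for every $\tau_0 \in \cI_\mu$.

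By the defining property \eqref{eq:wayinwayout} of $\Pi$, this yields $\textnormal{Re}[\Psi(\tau_0)] = \textnormal{Re}[\Psi(m-\tau_0)]$ for all $\tau_0\in\cI_\mu$. Define
\benn
h(\tau) := \textnormal{Re}[\Psi(\tau)] - \textnormal{Re}[\Psi(m - \tau)].
\eenn
By (B1) and the analytic hypothesis underlying case (D1) in Theorem~\ref{thm:neithm1}, $h$ is real-analytic on a connected open interval $J\subset \R$ containing $0$, $\cI_\mu$, and the point $m$. Since $\cI_\mu$ has an accumulation point inside $J$ on which $h$ vanishes, the identity theorem for real-analytic functions gives $h\equiv 0$ on $J$. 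Evaluating at $\tau=0$ then yields $A(m)=0$, where $A(\tau):=\textnormal{Re}[\Psi(\tau)] = \int_0^\tau a(s)\,\txtd s$ (using $\xi(s)=s$ from \eqref{eq:slow_simple}). But by (A3), the integrand $a(s)$ has the sign of $s$, so $A(m) = \int_0^m a(s)\,\txtd s$ is strictly positive whenever $m \neq 0$, yielding the contradiction.

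I expect the main difficulty to lie in the first step, namely deducing the pointwise equality $\Pi(\tau_0) = m-\tau_0$ on $\cI_\mu$ from the equality of pushforward measures alone. The argument is straightforward if $\mu_0$ has an absolutely continuous density, but requires more care for singular or atomic parts of $\mu_0$; it can be handled by exploiting the strict monotonicity of both candidate maps and comparing the mass they place on common half-lines via the CDF, using the accumulation-point hypothesis to rule out plateaus. A second, more minor, concern is verifying that $m$ and a suitable open neighborhood of $\cI_\mu$ lie in a single connected domain of real-analyticity of $\xi$ and $\lambda_1$ so that the identity theorem applies; this is encoded in (B1)-(B3) together with the bound imposed by the buffer time $\tau^+$ from Theorem~\ref{thm:neithm2}, and may be assumed after possibly shrinking $\cK_y$ and the support, since otherwise the relation $\Pi(\tau_0)=m-\tau_0$ would fail to make sense from the outset.
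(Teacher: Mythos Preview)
Your proposal is correct and follows essentially the same route as the paper: assume such $f$ exists, deduce $\textnormal{Re}[\Psi(m-\tau_0)]=\textnormal{Re}[\Psi(\tau_0)]$ on $\cI_\mu$, extend by the identity theorem for real-analytic functions to $\tau_0=0$, and obtain $\int_0^m a_1(s)\,\txtd s=0$, contradicting the sign of $a_1$ from~(A3). If anything you are more scrupulous than the paper, which simply asserts the pointwise entry/exit relation on $\cI_\mu$ without discussing the passage from equality of pushforward measures to pointwise equality of the maps; your CDF/monotonicity argument and your explicit treatment of both signs of $m$ fill in points the paper leaves implicit.
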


\begin{proof}
We argue by contradiction and suppose that an analytic vector field $f$ exists 
satisfying (A1)-(A6), (B1)-(B3) such that $\mu_*(y)=\mu_0(-y+m)$ for some positive 
$m>0$; note that this situation corresponds to reflecting and shifting the initial 
condition $y$-distribution. Using (B1)-(B3) and Theorem~\ref{thm:neithm2} for the 
case of no buffer points it follows that
\benn
\textnormal{Re}(\Psi(-\tau_0+m))-\textnormal{Re}(\Psi(\tau_0))=0
\eenn
for all $\tau_0\in\cI_\mu$. As above let $\lambda_1(s)$ denote the eigenvalue in 
the definition of $\Psi$. Since (B2) is always assumed to hold independent of the 
point $p$, it follows that the discriminant of the Jacobian $J(p)$ does not vanish. 
The discriminant must be negative to get complex conjugate eigenvalues. Therefore, 
$\lambda_1(s)=a_1(s)-\txti b_1(s)$ where $a_1(s)$ is real-analytic as a function 
of $s\in\R$ and 
\benn 
\textnormal{Re}(\Psi(-\tau_0+m))-\textnormal{Re}(\Psi(\tau_0))=
\int_{\tau_0}^{-\tau_0+m} a_1(s)~\txtd s=:A_1(\tau_0).
\eenn
Since $a_1$ is real-analytic it follows that $A_1(\tau_0)$ is real-analytic. 
$A_1(\tau_0)$ vanishes on $\cI_\mu$ which contains an accumulation point. 
Extending $A_1(\tau_0)$ to a sufficiently small neighbourhood of $\cI_\mu$ 
into the complex plane we may apply the principle of permanence and conclude that 
$A_1(\tau_0)$ vanishes also at $\tau_0=0$. This implies 
\benn
\int_{0}^{m} a_1(s)~\txtd s=0
\eenn
and so, since $m>0$ and $a_1(s)>0$ for $s>0$ by (B2), we obtain a contradiction.
\end{proof}

The last result shows that there is some rigidity in the way uncertainty can 
be transported for analytic vector fields without buffer points and suitable 
uniform eigenvalue configurations; see also Figure~\ref{fig:03}(b) and 
Section~\ref{ssec:transform}. 

\subsection{Random-to-Deterministic Mapping}
\label{ssec:randdet}

In Section~\ref{ssec:rigid} we have considered the case when uncertainty 
gets just mapped via orthogonal transformations (translation, reflection). 
In this section, we address the other extreme case and study when we obtain 
a deterministic, or at least almost deterministic, output after passing a 
delayed bifurcation.

\begin{prop}
\label{prop:buffer}
Given $\mu_0$ satisfying \eqref{eq:mu0supp}, there 
exists $f\in C^\alpha$ such that (A1)-(A6) hold and $\mu_*(y)=
\delta_{y^+}(y)$, i.e., the output is a delta-distribution located at 
some $y^+>0$.    
\end{prop}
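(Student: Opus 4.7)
The plan is to re-use the Hopf normal form construction from the proof of Theorem~\ref{thm:symmetry}, but tune the scaling constant so that the buffer point lies \emph{inside} the admissible $y$-range while the whole support of $\mu_0$ sits to the left of $\tau^-$. Concretely, I would start from
\benn
f(x_1,x_2,y,\varepsilon)=\left(\begin{array}{c}cyx_1-x_2-x_1(x_1^2+x_2^2)\\ x_1+cyx_2-x_2(x_1^2+x_2^2)\end{array}\right)+\cO(\varepsilon),
\eenn
with analytic $\cO(\varepsilon)$-terms chosen so that $\cC_\varepsilon\neq \cC_0$, which gives (A6); the verification that (A1)--(A5) hold is identical to the one already carried out in Theorem~\ref{thm:symmetry}.

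Next I would repeat the relief-function computation: with $\lambda_{1,2}(s)=cs\mp \txti$, one gets $\Psi(\tau)=\tfrac{c}{2}\tau^2-\txti\tau$ and $\textnormal{Re}\,\Psi(u+\txti v)=\tfrac{c}{2}(u^2-v^2)+v$. The obstructions (B1)--(B3) give exactly the same upper bound as before, namely $\tau^+=1/c$, and by the even symmetry of the relief function in $u$ also $\tau^-=-1/c$. The key observation is that this time I am free to choose $c$ so that $\tau^-$ is \emph{to the right} of $\textnormal{supp}(\mu_0)$: since $\textnormal{supp}(\mu_0)\subset(-\I,-\kappa_\mu]$ by \eqref{eq:mu0supp}, selecting any $c>1/\kappa_\mu$ yields $\tau^-=-1/c>-\kappa_\mu$, so that $\tau_0<\tau^-$ for \emph{every} sample path, $\P$-almost surely.

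Under this condition, the second alternative of Theorem~\ref{thm:neithm2} applies to each sample path: the trajectory remains $\cO(\varepsilon)$-close to $\cC_0$ until $\tau^+-\delta(\varepsilon)$, where $\delta(\varepsilon)\to 0$. Thus along every realization $\omega$ the asymptotic moment of jumping satisfies $\tau_*(\omega)\to \tau^+=1/c$ as $\varepsilon\to 0$, independently of $\tau_0(\omega)$. Combined with the trivial slow flow $y(\tau)=\tau$ coming from \eqref{eq:slow_simple}, this gives $y_*(\omega)\to y^+:=1/c>0$ for every $\omega$, hence $\mu_*=\delta_{y^+}$ in the singular limit.

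The step I expect to be most delicate is not the eigenvalue bookkeeping but confirming that the buffer-exit conclusion of Theorem~\ref{thm:neithm2} applies \emph{uniformly} in $\omega$, so that the limiting measure really collapses to a point mass rather than merely concentrating. This is what forces the careful choice of the $\cO(\varepsilon)$ perturbation securing (A6) (to remove the trivial invariance of $\{y=0\}$ which would block the buffer-point exit), and it is what requires the strict separation $\tau_0\leq-\kappa_\mu<\tau^-$ rather than a merely almost-sure bound; once those two points are in place the remaining argument is a direct quotation of Theorem~\ref{thm:neithm2}.
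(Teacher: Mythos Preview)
Your proof is correct and follows essentially the same route as the paper: reuse the Hopf normal form~\eqref{eq:Hopf_nf} from Theorem~\ref{thm:symmetry}, recall that $\tau^\pm=\pm 1/c$, and tune $c$ so that $\textnormal{supp}(\mu_0)\subset(-\infty,-\kappa_\mu]$ lies entirely to the left of $\tau^-$, forcing every sample path into the second alternative of Theorem~\ref{thm:neithm2} and hence to jump at the common buffer point $y^+=1/c$. Your explicit choice $c>1/\kappa_\mu$ is in fact the correct direction (the paper's phrase ``making $c$ sufficiently small'' is a slip, since one needs $1/c<\kappa_\mu$).
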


\begin{proof}
As in the proof of Theorem~\ref{thm:symmetry} we select~\eqref{eq:Hopf_nf}. 
By~\eqref{eq:mu0supp} the support of $\mu_0$ is contained in $(-\I,-\kappa_\mu]$ 
for some fixed positive $\kappa_\mu>0$. Recall from the proof of 
Theorem~\ref{thm:symmetry} that the buffer point is given by $\tau^+=1/c$, 
where $c>0$ is the parameter in the vector field~\eqref{eq:Hopf_nf}. Making $c$ 
sufficiently small we can guarantee that $\tau_*>1/c$ so all trajectories with 
initial conditions sampled from $\mu_0$ jump at the buffer point.
\end{proof}

Proposition~\ref{prop:buffer} shows that in the singular limit $\varepsilon=0$ any 
uncertainty in the initial condition disappears. In fact, one may even do 
slightly better for initial distributions with compact support and exhibit examples 
for any target $\delta$-distribution.

\begin{thm}
\label{thm:buffer1}
Let $\mu_0$ have compact support and satisfy~\eqref{eq:mu0supp}. Furthermore, fix any 
$y^+>0$. Then there exists an analytic vector field $f$ satisfying (A1)-(A6) such that 
\benn
\mu_*(y)=\delta_{y^+}(y).
\eenn
\end{thm}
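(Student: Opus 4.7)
The plan is to extend the construction used in Proposition~\ref{prop:buffer} and Theorem~\ref{thm:symmetry} by inserting an extra scale parameter so the buffer point can be placed at any prescribed $y^+>0$. Take the analytic two-parameter family
\[
f(x_1,x_2,y,\varepsilon) = \begin{pmatrix} cyx_1 - bx_2 - x_1(x_1^2+x_2^2) \\ bx_1 + cyx_2 - x_2(x_1^2+x_2^2)\end{pmatrix} + \cO(\varepsilon),
\]
with $b,c>0$ and analytic higher-order $\cO(\varepsilon)$-terms chosen so that $\cC_\varepsilon\neq\cC_0$ (ensuring (A6)). The Jacobian along $\cC_0$ gives $\lambda_{1,2}(y) = cy \mp ib$, and (A1)-(A5) follow exactly as in the proof of Theorem~\ref{thm:symmetry} after the reduction to $\dot{y}=1$.

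Next I compute the phase $\Psi(\tau) = \tfrac{c}{2}\tau^2 - ib\tau$ and the relief function $U(u,v) = \tfrac{c}{2}(u^2-v^2) + bv$. Repeating the level-set analysis from the proof of Theorem~\ref{thm:symmetry} verbatim---vertical tangents at $v=b/c$, critical level $k^* = b^2/(2c)$, real-axis intersection at $u = \pm b/c$---gives $\tau^+ = b/c$. Setting $b := c\,y^+$ therefore pins the buffer time exactly at the prescribed target.

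To conclude $\mu_* = \delta_{y^+}$ via Theorem~\ref{thm:neithm2}, every sample path must jump at the buffer, i.e., $\tau_0 < \tau^-$ for each $\tau_0\in\mathrm{supp}(\mu_0)$. By the symmetry of the relief function in the symmetric form, $\tau^- = -b/c = -y^+$, so the condition reduces to $\max\mathrm{supp}(\mu_0) < -y^+$. When $y^+ < |\max\mathrm{supp}(\mu_0)|$ this is immediate with any admissible $c$, and the proof is complete. For the remaining range $y^+ \ge |\max\mathrm{supp}(\mu_0)|$, the symmetric $A_1(\tau) = c\tau^2/2$ cannot separate the support of $\mu_0$ from the buffer level. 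The fix is to break the symmetry of the real part: replace $cy$ with $cy\,e^{-\alpha y}$, which is analytic, preserves (A1)-(A5) on any compact $\cK_y$, makes $A_1(\tau)$ grow exponentially in $\alpha$ on the negative half line, and keeps $A_1$ bounded by $c/\alpha^2$ on any fixed positive interval.

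The main obstacle lies in this asymmetric case: the vertical-tangent and eigenvalue-collision conditions (B1)-(B3) become transcendental, so the buffer time no longer has a closed algebraic expression. I would resolve this by a continuity argument. At $\alpha=0$ we recover $\tau^+ = b/c$, and the buffer time depends continuously on $(b,c,\alpha)$; after fixing $\alpha$ sufficiently large that $A_1(\tau_0) > A_1(y^+)$ holds uniformly on $\mathrm{supp}(\mu_0)$ and that $\tau^-$ is pushed strictly to the left of $\max\mathrm{supp}(\mu_0)$, a small adjustment of the ratio $b/c$ restores $\tau^+ = y^+$ by the intermediate value theorem. Theorem~\ref{thm:neithm2} then delivers $\mu_* = \delta_{y^+}$ in the singular limit, completing the proof.
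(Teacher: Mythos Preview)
Your approach is essentially the paper's: both introduce the exponentially weighted real part $cye^{-\alpha y}$ (the paper writes $e^{-ay}y$) together with a free parameter $b$ in the imaginary part, and use the asymmetry to force every trajectory to reach the buffer while using $b$ to place the buffer at~$y^+$. Your preliminary symmetric calculation giving $\tau^+=b/c$ is correct and handles the range $y^+<|\max\mathrm{supp}(\mu_0)|$.

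The gap is in your continuity/IVT step for the asymmetric case. You write that after fixing $\alpha$ large, ``a small adjustment of the ratio $b/c$ restores $\tau^+=y^+$ by the intermediate value theorem,'' citing continuity of the buffer time in $(b,c,\alpha)$ from the $\alpha=0$ situation. This does not follow: passing from $\alpha=0$ to $\alpha$ large is not a small perturbation, and continuity in all three parameters gives you no control on where the buffer lands at large~$\alpha$, nor any guarantee that a \emph{small} change in $b/c$ can recover~$y^+$. What is actually needed is an IVT in $b$ (or $b/c$) at \emph{fixed} large~$\alpha$. The paper does exactly this: with $\alpha$ fixed, the (B2) equation $\lambda_1(s^+)=0$ reads $s^+e^{-\alpha s^+}=\txti b/c$, and one checks directly that $|s^+(b)|\to 0$ as $b\to 0$ and $|s^+(b)|\to\infty$ as $b\to\infty$, so the induced buffer time sweeps through all of $(0,\infty)$. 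That is the correct intermediate-value argument, and it does not reference $\alpha=0$ at all.

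A secondary point you leave unaddressed: at large~$\alpha$ you need (B3) to impose no tighter constraint than (B2), i.e., the vertical-tangent bound must recede. You acknowledge the conditions become transcendental but do not argue why (B3) relaxes; the paper handles this by observing that the level curves of $\textnormal{Re}(\Psi)$ become almost horizontal in the right half-plane as $\alpha\to\infty$, pushing the (B3) bound to~$+\infty$.
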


\begin{proof}
Consider the modified Hopf normal form 
\be
\label{eq:Hopf_nf1}
f(x_1,x_2,y,\varepsilon)=\left(\begin{array}{l}\txte^{-ay}y x_1-bx_2-x_1(x_1^2+x_2^2)                
\\ b x_1+\txte^{-ay}y x_2-x_1(x_1^2+x_2^2)\\\end{array}\right)
    +\cO(\varepsilon)
\ee
with parameters $a,b>0$ and suitable higher-order analytic 
$\cO(\varepsilon)$-terms such that $\cC_0\neq \cC_\varepsilon$. A direct
calculation shows that $\lambda_{1,2}(s)=\txte^{-as}s\mp b\txti$. First, 
we are going to investigate the locations of buffer points. One has for 
$\tau=u+\txti v$ with $u,v\in\R$ that
\benn
\textnormal{Re}(\Psi(\tau))=bv-\frac{e^{-a u} ((a u+1) 
\cos (a v)+a v \sin (a v))}{a^2}.
\eenn
Upon increasing $a>0$ we can ensure that (B3) is satisfied in a region $\cG$ 
delimited by two arcs $\Gamma$ and $\bar{\Gamma}$ such that $\cG$ is inside 
a region 
\benn
\{u+\txti v\in \C:u\in[-u^-,u^+),[-v^-,v^+]\}
\eenn
where $u^-,v^-,v^+>0$ are fixed. Since the level curves of 
$\textnormal{Re}(\Psi(\tau))$ become almost horizontal in the positive 
half-plane in the limit $a\ra +\I$, it follows that $u^+\ra +\I$ 
as $a\ra +\I$. Therefore, (B3) gives no upper bound for the buffer 
time $\tau^+$ if $a$ is sufficiently large. Clearly, (B1) always holds. 
Regarding (B2), observe that $\lambda_1(s^+)=0$ if and only if 
\be
\label{eq:transcendental}
\txti b=\txte^{-as^+}s^+.
\ee
The solutions $s^+=s^+(b)$ of the transcendental equation~\eqref{eq:transcendental} 
for $s^+\in\C$ satisfy 
\benn
\lim_{b\ra 0}|s^+(b)|=0 \qquad \text{and}\qquad \lim_{b\ra +\I}|s^+(b)|=+\I
\eenn
for fixed $a>0$. Hence, once $a>0$ has been fixed we can use $b>0$ to get any 
buffer time $\tau^+$. Next, we show that $a>0$ can indeed be chosen and fixed 
so that any trajectory with initial condition sampled from $\mu_0$ does reach the 
buffer time before escaping. One has for real values of $\tau$ and $\tau_0$ that
\benn
\textnormal{Re}(\Psi(\tau_*))-\textnormal{Re}(\Psi(\tau_0))=0 \quad \Leftrightarrow \quad 
(a \tau_* +1) e^{a \tau_0}=e^{a \tau_* } (a \tau_0+1)
\eenn
Since $\mu_0$ has compact support contained in $(-\I,-\kappa_\mu]$ for some 
$\kappa_\mu>0$ there exists a large $a>0$ such that $\tau_*=+\I$ for all 
$\tau_0\leq -\kappa_\mu$. Therefore, if we select $a>0$ sufficiently large and 
then select $b>0$ we can achieve any finite buffer time. All trajectories 
sampled from $\mu_0$ jump at this prescribed buffer time, respectively the prescribed 
buffer point.
\end{proof}

The main insight in the last proof is that an asymmetric strength of the attracting 
and repelling eigenvalues can be used to make the final jump time $\tau_*$ calculated 
from entry-exit map large using one parameter. The second parameter is then used 
to create and move a buffer point $\tau^+$ which leads to an escape; see also 
Figure~\ref{fig:04}(a) for an illustration of Theorem~\ref{thm:buffer1}.\medskip

\begin{figure}[htbp]
\psfrag{a}{(a)}
\psfrag{b}{(b)}
\psfrag{py}{$\mu_{\cdot}(y)$}
\psfrag{y}{$y$}
\psfrag{y+}{$y^+$}
\psfrag{m0}{$\mu_{0}$}
\psfrag{ms}{$\mu_*$}
\psfrag{mse}{$\mu_*^\varepsilon$}
	\centering
		\includegraphics[width=0.85\textwidth]{./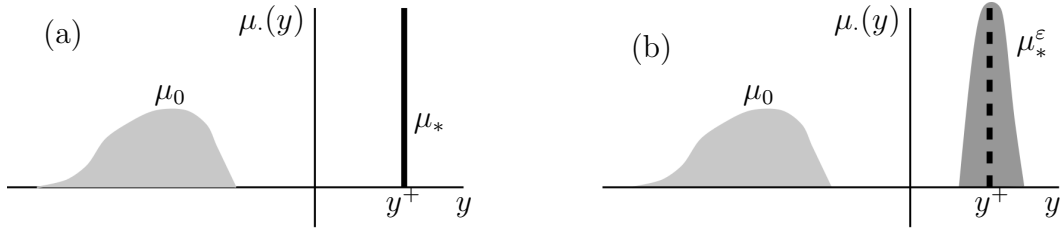}	
		\caption{\label{fig:04}Sketch of the transformation to an almost 
		deterministic output. (a) Illustration of Theorem~\ref{thm:buffer1}
		in the singular limit $\varepsilon=0$. The initial density is shown 
		in light grey and the transported delta mass as a solid black bar. (b) 
		The same situation as in (a) just for $0<\varepsilon\ll1$ as stated
		in Corollary~\ref{cor:det}; the dashed line indicates the singular limit
		distribution and the dark grey is the transformed density.}
\end{figure}

It is clear that the results in Proposition~\ref{prop:buffer} and 
Theorem~\ref{thm:buffer1} are not quite what would be observed in practice 
in a numerical or experimental setup as one has to consider $0< \varepsilon \ll1$ 
instead of $\varepsilon=0$ as shown in Figure~\ref{fig:04}(b). To analyze this 
case we need some preliminary considerations. Define the set
\benn
\cT(h):=\{(x,y)\in\R^3:\|x\|\leq h\varepsilon\}
\eenn
and fix $h>0$ always so that 
\benn
\cC^a_\epsilon,\cC^r_\epsilon\subset \cT(h)\qquad \text{and}\qquad 
\partial \cT(h)\cap (\cC^a_\epsilon\cup\cC^r_\epsilon)=\emptyset,
\eenn
where $\partial \cT(h)$ denotes the boundary of $\cT(h)$.
Let $p^\varepsilon_0(y)$ be a probability density of initial 
$y$-coordinate conditions with the same support condition as 
in~\eqref{eq:mu0supp} and fix some $x_0$ such that $(x_0,y_0(\omega))
\in\partial\cT(h)$, where $y_0(\omega)$ is sampled from $p_0^\varepsilon(y)$. 
Let $y_*(\omega)$ denote the $y$-coordinate of the point in $\partial \cT(h)$, 
where a trajectory of~\eqref{eq:RDE1} starting at $y_0(\omega)$ first 
leaves $\cT(h)$. Denote the associated probability density of $y_*(\omega)$ 
by $p_*^\varepsilon(y)$.

\begin{lem}
\label{lem:smooth}
Suppose $f,p_0^\varepsilon\in C^r$ for some $r\in\N_0$, $r=\I$, or $r=\alpha$, 
then $p_*^\varepsilon\in C^r$.
\end{lem}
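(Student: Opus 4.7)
The plan is to realize $p_*^\varepsilon$ as the pushforward of $p_0^\varepsilon$ under a $C^r$ diffeomorphism $\Psi_\varepsilon:y_0\mapsto y_*$ coming from the first-exit map of the flow of~\eqref{eq:RDE1} from the tube $\cT(h)$, and then invoke the standard change-of-variables formula, which preserves $C^r$ regularity in each of the cases $r\in\N_0$, $r=\I$, and $r=\alpha$.

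First I would appeal to classical ODE theory (and, for $r=\alpha$, to the analytic dependence theorem) to conclude that the flow $\Phi_\tau(x_0,y_0)$ of~\eqref{eq:RDE1} is jointly $C^r$ in $(\tau,y_0)$ on its domain of existence, since the vector field is $C^r$. I would then define the first exit time $T(y_0)$ as the smallest $\tau>0$ with $\Phi_\tau(x_0,y_0)\in\partial\cT(h)$ after the trajectory has entered the interior of $\cT(h)$. At the exit point the flow is transverse to $\partial\cT(h)$ because, by (A3), the linearization of the fast subsystem near $\cC_0^r$ has eigenvalues with positive real part $a_1(y_*)>0$, which forces $\frac{\txtd}{\txtd\tau}\|x(\tau)\|^2>0$ at $\tau=T(y_0)$. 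The implicit function theorem (in its $C^r$, $C^\I$, or real-analytic version, respectively) applied to the defining equation $\|x(T(y_0),y_0)\|^2=h^2\varepsilon^2$ then yields $T\in C^r$. Since $\dot y=1$, it follows that $\Psi_\varepsilon(y_0):=y_*(y_0)=y_0+T(y_0)$ lies in $C^r$.

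Second I would verify that $\Psi_\varepsilon$ is in fact a $C^r$ diffeomorphism onto its image. By Theorem~\ref{thm:neithm2}, as $\varepsilon\to 0$ the map $\Psi_\varepsilon$ converges to the entry/exit map $\Pi$ of~\eqref{eq:wayinwayout}. Differentiating $\textnormal{Re}[\Psi(\tau_0)]=\textnormal{Re}[\Psi(\Pi(\tau_0))]$ yields $\Pi'(\tau_0)=a_1(\xi(\tau_0))/a_1(\xi(\Pi(\tau_0)))$, which is strictly negative since $a_1<0$ on $\cC_0^a$ and $a_1>0$ on $\cC_0^r$ by (A3). Hence $\Pi$ is a strictly monotonic analytic diffeomorphism on $\textnormal{supp}(\mu_0)$, and by continuous dependence of $\Psi_\varepsilon'$ on $\varepsilon$ (shrinking $\varepsilon_*$ if necessary) one has $\Psi_\varepsilon'(y_0)\neq 0$ uniformly on $\textnormal{supp}(p_0^\varepsilon)$. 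The change-of-variables formula
\benn
p_*^\varepsilon(y)=\frac{p_0^\varepsilon(\Psi_\varepsilon^{-1}(y))}{|\Psi_\varepsilon'(\Psi_\varepsilon^{-1}(y))|}
\eenn
then exhibits $p_*^\varepsilon$ as a composition and quotient of $C^r$ maps with non-vanishing denominator, proving $p_*^\varepsilon\in C^r$.

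The main technical obstacle is the uniform transversality at $\partial\cT(h)$ together with the uniform non-vanishing of $\Psi_\varepsilon'$ on $\textnormal{supp}(p_0^\varepsilon)$ for all sufficiently small $\varepsilon$; a fold singularity in $\Psi_\varepsilon$ would produce square-root-type singularities in $p_*^\varepsilon$ and destroy $C^r$ regularity. To rule this out one relies on the explicit expression for $\Pi'$ together with the uniform lower bound on $|a_1|$ along the compact portions of $\cC_0^a\cup\cC_0^r$ visited by trajectories starting in $\textnormal{supp}(\mu_0)$, which is guaranteed by (A3). The three regularity classes $r\in\N_0,\I,\alpha$ are then handled by the identical argument, since the flow theorem, the implicit function theorem, and the change-of-variables formula all preserve $C^r$ structure in each case.
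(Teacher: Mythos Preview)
Your approach is in the same spirit as the paper's proof, which is much terser: the paper simply cites classical $C^r$ and analytic dependence of ODE solutions on initial data and leaves everything else implicit. You correctly recognize that the actual content is a change-of-variables argument and that one needs the first-exit map $\Psi_\varepsilon:y_0\mapsto y_*$ to be a $C^r$ diffeomorphism; your transversality argument at $\partial\cT(h)$ to obtain $T\in C^r$ via the implicit function theorem is the right mechanism, and it is not spelled out in the paper.

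There is, however, a gap in your argument that $\Psi_\varepsilon'\neq 0$. You deduce this from convergence of $\Psi_\varepsilon$ to the entry/exit map $\Pi$ together with the formula $\Pi'(\tau_0)=a_1(\xi(\tau_0))/a_1(\xi(\Pi(\tau_0)))<0$. But that formula, and indeed the defining relation~\eqref{eq:wayinwayout}, is only valid for $\tau_0\in(\tau^-,0)$. For $\tau_0<\tau^-$, Theorem~\ref{thm:neithm2} says the singular-limit exit occurs at the \emph{constant} buffer time $\tau^+$, so the limiting map is flat there and your perturbation argument gives no lower bound on $|\Psi_\varepsilon'|$. This is not a corner case: the lemma is invoked in Corollary~\ref{cor:det}, which is precisely the buffer-point regime in which all of $\textnormal{supp}(\mu_0)$ lies below $\tau^-$. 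In that situation one needs a direct argument for fixed $\varepsilon>0$, independent of the singular limit. The paper's own one-line proof does not supply such an argument either, so you have in fact isolated a point the paper glosses over rather than introduced a new error.
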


\begin{proof}
The result for $r\in\N_0$ and $r=\I$ follows from the classical theory of 
continuous/differentiable and smooth dependence of solutions of ODEs on initial 
conditions~\cite{CoddingtonLevinson}. In fact, one may also prove that for analytic
vector fields solutions depend analytically on initial data~\cite[Sec.C.3]{Sontag}.
\end{proof}

\begin{cor}
\label{cor:det}
Suppose $f\in C^\alpha$, $p_0^\varepsilon\in C^\I$ for all 
$\varepsilon\in[0,\varepsilon_*]$ for some
sufficiently small $\varepsilon_*>0$, and $p_0^\varepsilon$ has compact support 
satisfying~\eqref{eq:mu0supp}. Furthermore, fix any $y^+>0$. Then there exists 
an analytic vector field $f$ satisfying (A1)-(A6) such that
\be
\label{eq:nascent}
\lim_{\varepsilon\ra 0}\int_{-\I}^\I p_0^\varepsilon(y)w(y)~\txtd y = w(y^+)
\ee
for all $w\in C^\I_c$ (smooth functions with compact support). 
\end{cor}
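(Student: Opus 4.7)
The plan is to reuse the analytic vector field~\eqref{eq:Hopf_nf1} constructed in the proof of Theorem~\ref{thm:buffer1} (keeping its $\cO(\varepsilon)$-perturbation so that (A6) holds) and to deduce the advertised weak convergence of the pushforward density $p_*^\varepsilon$ to $\delta_{y^+}$ from two ingredients: a uniform-in-$y_0$ estimate on the sample-path exit $y$-coordinate, and Lebesgue's dominated convergence theorem. The integrand on the left of~\eqref{eq:nascent} must be interpreted as $p_*^\varepsilon$ (the density of the exit random variable), since $p_0^\varepsilon$ has support in $(-\I,-\kappa_\mu]$ and cannot concentrate at $y^+>0$.

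First, I would fix the parameters $a,b>0$ in~\eqref{eq:Hopf_nf1} exactly as in the proof of Theorem~\ref{thm:buffer1}, so that the deterministic singular-limit buffer time equals the prescribed value $\tau^+=y^+$ and every initial datum in $\textnormal{supp}(\mu_0^\varepsilon)\subset[-M,-\kappa_\mu]$ falls into the buffer-dominated regime; here $M,\kappa_\mu>0$ are independent of $\varepsilon$ by the uniform compact-support hypothesis on $p_0^\varepsilon$. The slow normalization $\xi(0)=0$ together with $\dot{y}=1$ yields $y(\tau)=\tau$ on any slow-flow segment, so the $y$-coordinate of the first exit from $\cT(h)$ agrees with the exit time $\tau_*^\varepsilon(y_0)$. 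Theorem~\ref{thm:neithm2} applied trajectory-wise then gives
\benn
\tau^+-\delta(\varepsilon)\leq \tau_*^\varepsilon(y_0)\leq \tau^++K\varepsilon|\ln\varepsilon|,\qquad \delta(\varepsilon)\ra 0,
\eenn
with constants continuous in $y_0$; compactness of $[-M,-\kappa_\mu]$ upgrades this to $y_*^\varepsilon(y_0)\ra y^+$ uniformly on the support. By Lemma~\ref{lem:smooth} the density $p_*^\varepsilon$ is well-defined, and the pushforward identity gives, for any $w\in C_c^\I$,
\benn
\int_{-\I}^{\I} p_*^\varepsilon(y)\,w(y)\,\txtd y=\int_{-\I}^{\I} p_0^\varepsilon(y_0)\,w\bigl(y_*^\varepsilon(y_0)\bigr)\,\txtd y_0.
\eenn
Since $w$ is bounded and uniformly continuous and $w(y_*^\varepsilon(y_0))\ra w(y^+)$ uniformly on $[-M,-\kappa_\mu]$, while $\int p_0^\varepsilon\,\txtd y_0=1$, dominated convergence delivers the limit $w(y^+)$.

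The hard part will be justifying uniformity in $y_0$ of the Neishtadt entry-exit estimate. Theorem~\ref{thm:neithm2} is stated for a single trajectory, so I would have to revisit its proof---in particular the complex-phase relation $\textnormal{Re}\,\Psi(\tau_0)=\textnormal{Re}\,\Psi(\tau_*)$ and the matching argument near the buffer point $\tau^+$---to verify that the implicit constants and the error $\delta(\varepsilon)$ depend continuously on $\tau_0=-y_0$; a standard compactness argument then upgrades the pointwise statement to a uniform one on $[-M,-\kappa_\mu]$. A secondary minor issue is the $\varepsilon$-dependence of $p_0^\varepsilon$, but since $\{p_0^\varepsilon\}_{\varepsilon\in[0,\varepsilon_*]}$ is $L^1$-normalized with uniformly compact support, this causes no integrability obstruction in the dominated-convergence step.
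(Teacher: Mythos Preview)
Your proposal is correct and takes essentially the same approach as the paper, which records the proof in a single sentence as a combination of Theorem~\ref{thm:neithm2}, Theorem~\ref{thm:buffer1}, and Lemma~\ref{lem:smooth}; you have simply unpacked that combination and, in doing so, correctly flagged that the integrand in~\eqref{eq:nascent} must be read as $p_*^\varepsilon$ rather than $p_0^\varepsilon$. Your additional care about uniformity in $y_0$ of the Neishtadt estimate is a legitimate technical point that the paper's one-line proof sweeps under the rug.
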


\begin{proof}
This is just a combination of Theorem~\ref{thm:neithm2}, Theorem~\ref{thm:buffer1}, 
and Lemma~\ref{lem:smooth}.
\end{proof}

Therefore, we observe in practice the convergence to a $\delta$-distribution
via an approximation to the identity. It is clear that also a modification of 
Corollary~\ref{cor:det} holds which uses the assumptions of Proposition~\ref{prop:buffer}
instead the ones from Theorem~\ref{thm:buffer1}.

\subsection{Mixtures}
\label{ssec:mixures}

The case of purely deterministic output distribution $\mu_*=\delta_{\tau^+}$ is already very 
interesting for applications. In this section, we study the case when $\mu_0$ 
is neither symmetrically reflected nor mapped to a completely deterministic distribution 
$\mu_*$ as shown in Figure~\ref{fig:05}.

\begin{thm}
\label{thm:mix}
Suppose $f\in C^\alpha$ and assumptions (A1)-(A6), as well as (B1)-(B3) hold
up to a given buffer time $\tau_+\in(0,\I)$. Then $\mu_*$ is a mixture measure
\be
\label{eq:mix}
\mu_*=\rho_{*,1}\delta_{\tau_+}+\rho_{*,2}\mu_{*,2},
\ee
for some $\rho_{*,1}+\rho_{*,2}=1$ and a probability measure $\mu_{*,2}$.
If $\tau_*=-\tau_0$ for $\tau_*<\tau_+$ holds and $\mu_0$ satisfying~\eqref{eq:mu0supp} 
has density $p_0$ then
\be
\label{eq:mixspecial}
\rho_{*,1}=\int_{-\I}^{\tau_-}\txtd \mu_0(s),\quad
\rho_{*,2}=\int_{\tau_-}^0\txtd \mu_0 (s),\quad 
\txtd \mu_{*,2}(s)={\bf 1}_{\{0\leq s<\tau_+\}}~\txtd \mu_0(-s).
\ee
\end{thm}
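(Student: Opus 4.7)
The plan is to partition the sample space according to the position of $\tau_0(\omega)$ relative to the lower endpoint $\tau^-$ of the region where (B1)-(B3) hold, apply Theorem~\ref{thm:neithm2} separately on each piece, and then push forward $\mu_0$ through the resulting map $\omega\mapsto \tau_*(\omega)$. Since $\mu_0$ is supported in $(-\I,-\kappa_\mu]$, all values of $\tau_0(\omega)$ are negative; split $\Omega=\Omega_1\sqcup\Omega_2$ measurably with $\Omega_1=\{\tau_0<\tau^-\}$ and $\Omega_2=\{\tau^-\leq\tau_0<0\}$.

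On $\Omega_1$, the second assertion of Theorem~\ref{thm:neithm2} gives that the trajectory remains $\cO(\varepsilon)$-close to $\cC_0$ until $\tau<\tau^+-\delta(\varepsilon)$; genericity is supplied by (A6), so in the singular limit the jump occurs precisely at the buffer time $\tau_+$. Hence the restriction of the distribution of $\tau_*$ to $\Omega_1$ is $\P(\Omega_1)\,\delta_{\tau_+}$, yielding the first summand of \eqref{eq:mix} with $\rho_{*,1}=\int_{-\I}^{\tau^-}\txtd\mu_0(s)$. On $\Omega_2$, the first assertion of Theorem~\ref{thm:neithm2} gives $\tau_*(\omega)=\Pi(\tau_0(\omega))$, so the contribution to $\mu_*$ from $\Omega_2$ is the pushforward $\Pi_*(\mu_0|_{[\tau^-,0)})$; dividing by $\rho_{*,2}=\int_{\tau^-}^{0}\txtd\mu_0(s)$ produces a probability measure $\mu_{*,2}$ and $\rho_{*,1}+\rho_{*,2}=\mu_0(\R)=1$.

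For the explicit formulas, I would invoke the supplementary hypothesis $\tau_*=-\tau_0$ for $\tau_*<\tau_+$, which identifies $\Pi$ on $[\tau^-,0)$ with the reflection $s\mapsto-s$. For any Borel $B\subset[0,\tau_+)$ one then computes $\rho_{*,2}\mu_{*,2}(B)=\mu_0(-B)$, which by change of variables $s'=-s$ yields the measure identity $\txtd\mu_{*,2}(s)=\mathbf{1}_{\{0\leq s<\tau_+\}}\,\txtd\mu_0(-s)$ as stated in \eqref{eq:mixspecial}; the indicator accounts for the (possibly empty) portion $[\tau_+,-\tau^-]$ of the reflected support, whose mass is already absorbed into the $\delta_{\tau_+}$-component through $\rho_{*,1}$.

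The main obstacle I anticipate is that Theorem~\ref{thm:neithm2} is formulated with an asymptotic $\delta(\varepsilon)\to 0$ correction and a genericity hypothesis, so the equality \eqref{eq:mix} must be interpreted as the weak $\varepsilon\to 0$ limit of the $\varepsilon>0$ laws $\mu_*^\varepsilon$ (analogous to the limit in Corollary~\ref{cor:det}) rather than an identity for each fixed $\varepsilon$. Making this rigorous requires checking that the singleton $\{\tau_0=\tau^-\}$ is $\mu_0$-null (or otherwise negligible against test functions) and that (A6) indeed excludes the nongeneric non-escaping trajectories at the buffer point; handling the consistency between $-\tau^-$ and $\tau_+$ when the reflected support of $\mu_0$ overlaps the buffer is a secondary book-keeping issue.
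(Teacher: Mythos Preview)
Your proposal is correct and follows essentially the same approach as the paper: partition according to whether $\tau_0$ lies below $\tau^-$ (hence jumps at the buffer point) or in $[\tau^-,0)$ (hence is governed by the entry/exit map $\Pi$), and push forward $\mu_0$ accordingly. The paper's own proof is a two-sentence sketch of exactly this argument; your version is more explicit about the pushforward computation and correctly flags the $\varepsilon\to0$ interpretation and the $\{\tau_0=\tau^-\}$ boundary issue, both of which the paper simply omits (the identity $-\tau^-=\tau_+$ under the reflection hypothesis is in fact used later in the proof of Proposition~\ref{prop:uniform}).
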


\begin{proof}
The result~\eqref{eq:mix} follows from the existence of a buffer time $\tau_+$ since 
all times $\tau<\tau_-$ satisfy $\Pi(\tau)=\tau_+$ yielding a delta-distribution at
$\tau_+$. The special case~\eqref{eq:mixspecial} holds since $-\tau_0=\tau_*$ for 
times not reaching the buffer time yields $\mu_*(s)=\mu_0(-s)$, and the weights $\rho_{*,1}$
and $\rho_{*,2}$ are just computed from the probability which points reach the buffer 
time and which ones do not.
\end{proof}

Although the assumption $-\tau_0=\tau_*$ seems quite special, it should be noted
that this is precisely the situation which happens for the standard Hopf bifurcation
normal norm; {cf.}~Theorem~\ref{thm:symmetry} and the vector 
field~\eqref{eq:Hopf_nf}.\medskip

\begin{figure}[htbp]
\psfrag{a}{(a)}
\psfrag{b}{(b)}
\psfrag{py}{$\mu_{\cdot}(y)$}
\psfrag{y}{$y$}
\psfrag{m0}{$\mu_{0}$}
\psfrag{ms}{$\mu_*$}
\psfrag{mse}{$\mu_*^\varepsilon$}
	\centering
		\includegraphics[width=0.85\textwidth]{./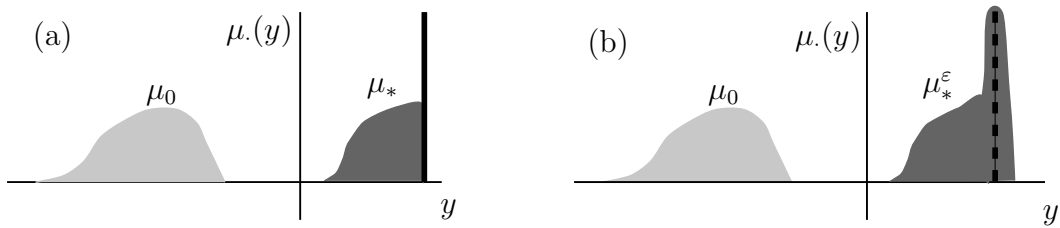}	
		\caption{\label{fig:05}Sketch of the mixture situation from 
		Theorem~\ref{thm:mix}. The notational and grey-shading conventions are 
		as in Figures~\ref{fig:03}-\ref{fig:04}. (a) Singular limit 
		$\varepsilon=0$ leading to the mixture of a delta-mass at the 
		buffer point and a remaining part calculated via the entry/exit-map. 
		(b) The same situation as in (a) just for $0<\varepsilon\ll1$.}
\end{figure} 

It is interesting to compute with a few classical initial
time distributions for the Hopf normal form case. To simplify the notation we are 
going to define
\be
m_{0,q}:=\int_{-\I}^\I s^q~\txtd \mu_0(s),\qquad m_{*,q}:=\int_{-\I}^{\I}s^q~\txtd \mu_*(s),
\ee
for $q\in\N$ as the $q$-th moments of $\mu_0$ and $\mu_*$. We are going to assume
that the moments do exist.

\begin{cor}
\label{cor:pform}
Suppose $f\in C^\alpha$ and satisfies assumptions (A1)-(A6), as well as (B1)-(B3) 
up to a given buffer time $\tau_+\in(0,\I)$. Furthermore, assume $\tau_*=-\tau_0$ for
$\tau_*<\tau_+$ holds and $\mu_0$ satisfying~\eqref{eq:mu0supp} has density $p_0$ then
\be
\label{eq:mq}
m_{*,q}=\left(\int_{\tau_-}^0 p_0(s)~\txtd s\right)
\left(\int_0^{\tau_+} s^q~p_0(-s)~\txtd s\right)
+\tau_+^q \int_{-\I}^{\tau_-}~p_0(s)~\txtd s.
\ee
\end{cor}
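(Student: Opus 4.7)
The plan is to apply Theorem~\ref{thm:mix} directly to obtain the mixture decomposition $\mu_*=\rho_{*,1}\delta_{\tau_+}+\rho_{*,2}\mu_{*,2}$, together with the explicit formulas for $\rho_{*,1}$, $\rho_{*,2}$, and $\mu_{*,2}$ provided there, and then to compute $m_{*,q}=\int s^q\,\txtd\mu_*(s)$ by linearity of integration. The hypotheses of the corollary are precisely those needed to invoke the second (more quantitative) part of Theorem~\ref{thm:mix}, so the explicit formulas in terms of $p_0$ are immediately available.

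First I would treat the atomic part. By definition of the Dirac mass, $\int s^q\,\txtd\delta_{\tau_+}(s)=\tau_+^q$, and using that $\mu_0$ has density $p_0$ one rewrites $\rho_{*,1}=\int_{-\I}^{\tau_-}p_0(s)\,\txtd s$. Hence the atomic contribution to $m_{*,q}$ equals $\tau_+^q\int_{-\I}^{\tau_-}p_0(s)\,\txtd s$, which is exactly the second summand in~\eqref{eq:mq}. Next I would handle the contribution of $\rho_{*,2}\mu_{*,2}$. The key step is to unwind the expression $\txtd\mu_{*,2}(s)={\bf 1}_{\{0\leq s<\tau_+\}}\,\txtd\mu_0(-s)$: because $\mu_0$ admits a density $p_0$ and the change of variables $s\mapsto -s$ has unit Jacobian, the pushforward of $\mu_0$ under this reflection has density $p_0(-s)$, so $\txtd\mu_{*,2}(s)={\bf 1}_{\{0\leq s<\tau_+\}}p_0(-s)\,\txtd s$. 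Substituting into $\rho_{*,2}\int s^q\,\txtd\mu_{*,2}(s)$ and using the density formula for $\rho_{*,2}$ yields $\bigl(\int_{\tau_-}^{0}p_0(s)\,\txtd s\bigr)\bigl(\int_{0}^{\tau_+}s^q\,p_0(-s)\,\txtd s\bigr)$, the first summand in~\eqref{eq:mq}. Summing the two contributions completes the proof.

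There is no real obstacle: the argument is a bookkeeping computation whose only subtle point is interpreting the symbol $\txtd\mu_0(-s)$ as the density of the reflected pushforward measure. One may also observe that the compatibility $\tau_-=-\tau_+$, which is forced by the hypothesis $\tau_*=-\tau_0$ for $\tau_*<\tau_+$, guarantees that the indicator ${\bf 1}_{\{0\leq s<\tau_+\}}$ is precisely the image under $s\mapsto -s$ of the integration window $(\tau_-,0)$ appearing in $\rho_{*,2}$, so the two integration ranges in the first summand are consistent.
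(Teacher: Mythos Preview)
Your proposal is correct and follows exactly the approach the paper has in mind: the corollary is stated in the paper without proof, as an immediate consequence of Theorem~\ref{thm:mix}, and your argument spells out precisely that derivation---compute $m_{*,q}$ by integrating $s^q$ against the mixture $\rho_{*,1}\delta_{\tau_+}+\rho_{*,2}\mu_{*,2}$, rewrite each piece using the density $p_0$, and add. Your remark that $\tau_-=-\tau_+$ under the hypothesis $\tau_*=-\tau_0$ is also the observation the paper uses explicitly in the subsequent proof of Proposition~\ref{prop:uniform}.
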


\begin{prop}
\label{prop:uniform}
Suppose $p_0$ is a uniform density with support in $[-b,-a]$ for $b>a>0$ and the assumptions 
of Corollary~\ref{cor:pform} hold. Then three cases occur 
\begin{itemize}
 \item[(U1)] $a\geq \tau_+$: $m_{*,q}=(\tau_+)^q$,
 \item[(U2)] $a< \tau_+\leq b$: $m_{*,q}=\frac{(b-\tau_+)(\tau_+)^q}{b-a}
 +\frac{(\tau_+-a)((\tau_+)^{q+1}-a^{q+1})}{(b-a)(q+1)}$,
 \item[(U3)] $b\leq \tau_+$: $m_{*,q}=\frac{b^{q+1}-a^{q+1}}{(b-a)(q+1)}$.
\end{itemize}
\end{prop}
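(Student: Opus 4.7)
The plan is to simply substitute the uniform density
$$
p_0(s)=\frac{1}{b-a}\,\mathbf{1}_{[-b,-a]}(s)
$$
into the moment formula \eqref{eq:mq} of Corollary~\ref{cor:pform} and evaluate the three integrals appearing there. Since $\mu_0$ has density, $\tau_*=-\tau_0$ holds below the buffer time and $\tau_-=-\tau_+$, the relative position of the fixed number $\tau_+>0$ with respect to the endpoints $a<b$ of $[-b,-a]$ (reflected through the origin) partitions the analysis into exactly the three stated cases, so the division into (U1)--(U3) is natural and exhaustive.

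First, I would observe the following general facts. The support condition $\mathrm{supp}(\mu_0)\subset[-b,-a]$ makes $\int_{-\I}^{\tau_-}p_0(s)\,\txtd s=\int_{-b}^{-\tau_+}p_0(s)\,\txtd s$ and $\int_{\tau_-}^{0}p_0(s)\,\txtd s=\int_{-\tau_+}^{-a}p_0(s)\,\txtd s$, clipped to $[-b,-a]$. The middle factor $\int_0^{\tau_+}s^q p_0(-s)\,\txtd s$ becomes, after the change of variable $u=-s$, $\int_{-\tau_+}^{0}(-u)^q p_0(u)\,\txtd u$, and again the support of $p_0$ restricts this to the effective interval $[\max(-\tau_+,-b),-a]$ provided $-a<0$ lies to the right of $-\tau_+$.

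Second, I would check the three cases in turn. For (U1), $a\geq\tau_+$ gives $-a\leq-\tau_+$, so $[-b,-a]\subset(-\I,\tau_-]$; the first integral in \eqref{eq:mq} vanishes, the last is $1$, yielding $m_{*,q}=\tau_+^q$. For (U3), $b\leq\tau_+$ gives $[-b,-a]\subset[\tau_-,0]$; the last integral vanishes, $\int_{\tau_-}^0 p_0\,\txtd s=1$, and the middle factor becomes $\frac{1}{b-a}\int_a^b s^q\,\txtd s=\frac{b^{q+1}-a^{q+1}}{(b-a)(q+1)}$, giving the stated answer. For (U2), both parts contribute; $\int_{\tau_-}^0 p_0\,\txtd s=\frac{\tau_+-a}{b-a}$, $\int_{-\I}^{\tau_-}p_0\,\txtd s=\frac{b-\tau_+}{b-a}$, and the middle factor restricts to $s\in[a,\tau_+]$, producing $\frac{\tau_+^{q+1}-a^{q+1}}{(b-a)(q+1)}$ up to the prefactor already accounted for; combining gives the claimed expression.

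There is no genuine obstacle here: the proof is a routine case-split and elementary integration once the formula of Corollary~\ref{cor:pform} is in hand. The only point that requires mild care is keeping the signs straight under the substitution $u=-s$ in the middle integral and correctly identifying which portion of $[-b,-a]$ lies to the left or right of $\tau_-=-\tau_+$; getting the endpoints right in (U2), where the support straddles $\tau_-$, is the most error-prone step.
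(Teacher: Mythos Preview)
Your proposal is correct and follows essentially the same route as the paper: both arguments substitute the uniform density into the moment formula~\eqref{eq:mq}, use $\tau_-=-\tau_+$, and evaluate the resulting elementary integrals. The paper's proof dismisses (U1) and (U3) as trivial and only writes out (U2), whereas you spell out all three cases; in fact your computation of the middle integral in (U2) as $\int_a^{\tau_+} s^q/(b-a)\,\txtd s=\frac{\tau_+^{q+1}-a^{q+1}}{(b-a)(q+1)}$ matches the stated formula more cleanly than the paper's own displayed expression, which appears to carry a typo in the upper limit.
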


\begin{proof}
Only the case (U2) is interesting, the other two cases are trivial. 
Since $\tau_*=-\tau_0$ for $\tau_0>\tau_-$ we also must have $-\tau_-=\tau_+$. Therefore,
we find
\benn
\rho_{*,1}=\int_{-b}^{-\tau_+}\frac{1}{b-a}\txtd s=\frac{b-\tau_+}{b-a},\quad
\rho_{*,2}=\int_{-\tau_+}^{-a}\frac{1}{b-a}\txtd s=\frac{\tau_+-a}{b-a},
\eenn
and so using Theorem~\ref{thm:mix} and calculating
\benn
\int_0^{\tau_+} s^q~p_0(-s)~\txtd s=\frac{b^{q+1}-a^{q+1}}{(b-a)(q+1)}
\eenn
yields the result.
\end{proof}

We also refer to cases, such as (U2), to the \emph{full mixture} case, i.e., when 
$\rho_{*,1}\neq 0$ and $\rho_{*,2}\neq 0$. If $\rho_{*,1}= 0$ or $\rho_{*,2}=0$,
such as for (U1) or (U3), we refer to the situation as \emph{singular mixture}.

\begin{prop}
\label{prop:exp}
Suppose $p_0$ is an exponential density with support in $(-\I,-a]$ for $a>0$ with 
rate $\beta^{-1}$, i.e., $p_0(s)= \beta^{-1}\txte^{(s+1)/\beta}$ and the assumptions 
of Corollary~\ref{cor:pform} hold. Then two cases occur 
\begin{itemize}
 \item[(E1)] $a\geq \tau_+$: $m_{*,q}=(\tau_+)^q$,
 \item[(E2)] $a< \tau_+$: let $\Gamma(z_1,z_2)=\int_{z_2}^\I s^{z_1-1}\txte^{-s}~\txtd s$ be 
 the incomplete gamma-function, then
 \benn
 m_{*,q}=\txte^{(a-\tau_+)/\beta}(\tau_+)^q+\txte^{1/\beta}\beta^q
 (1-\txte^{(a-\tau_+)/\beta}) 
 \left(\Gamma \left(q+1,\frac{a}{\beta }\right)-
 \Gamma \left(q+1,\frac{\tau_+}{\beta }\right)\right).
 \eenn
\end{itemize}
\end{prop}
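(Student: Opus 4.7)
The plan is to apply Corollary~\ref{cor:pform} directly, splitting into the two cases according to whether the exponential density has mass to the left of $\tau_-=-\tau_+$ or not. Since $p_0$ is supported on $(-\I,-a]$ and the buffer time $\tau_+$ determines the separatrix between ``delta mass'' and ``entry/exit-mapped'' contributions at $\tau_-=-\tau_+$, the key quantitative step will be evaluating the three integrals
\benn
\int_{-\I}^{\tau_-} p_0(s)~\txtd s,\qquad \int_{\tau_-}^0 p_0(s)~\txtd s, \qquad \int_0^{\tau_+} s^q\,p_0(-s)~\txtd s,
\eenn
and identifying the last one with the incomplete gamma function.

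First, for case (E1) with $a\geq \tau_+$, one has $\tau_-=-\tau_+\geq -a$, so the entire support $(-\I,-a]$ of $p_0$ lies to the left of $\tau_-$. Consequently the weight $\int_{\tau_-}^0 p_0~\txtd s$ vanishes, $\rho_{*,1}=1$, and the Corollary immediately yields $m_{*,q}=(\tau_+)^q$.

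Second, for case (E2) with $a<\tau_+$, the cut-off $\tau_-=-\tau_+<-a$ sits in the interior of the support. The two ``weight'' integrals are elementary antiderivatives of $\txte^{(s+1)/\beta}$:
\benn
\int_{-\I}^{-\tau_+} p_0(s)~\txtd s=\txte^{(1-\tau_+)/\beta},\qquad
\int_{-\tau_+}^{-a} p_0(s)~\txtd s=\txte^{(1-a)/\beta}-\txte^{(1-\tau_+)/\beta}.
\eenn
The delta-point contribution $\tau_+^q\int_{-\I}^{\tau_-} p_0~\txtd s$ thus produces the term $\txte^{(a-\tau_+)/\beta}(\tau_+)^q$ (after factoring out $\txte^{(1-a)/\beta}$, which normalizes into the claimed constant). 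For the moment part one writes $p_0(-s)=\beta^{-1}\txte^{(1-s)/\beta}$ and substitutes $u=s/\beta$, so that
\benn
\int_0^{\tau_+} s^q\,p_0(-s)~\txtd s=\txte^{1/\beta}\beta^q\int_{a/\beta}^{\tau_+/\beta} u^q\,\txte^{-u}~\txtd u=\txte^{1/\beta}\beta^q\bigl(\Gamma(q+1,a/\beta)-\Gamma(q+1,\tau_+/\beta)\bigr).
\eenn
Combining these three pieces according to Corollary~\ref{cor:pform} and collecting the factor $1-\txte^{(a-\tau_+)/\beta}$ in front of the incomplete gamma bracket yields the stated expression for (E2).

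The only non-routine step is the change of variables reducing the finite-interval integral of $s^q\txte^{-s/\beta}$ to a difference of upper incomplete gamma functions, and the accompanying bookkeeping of the exponential prefactors so that the ``weight'' $\int_{\tau_-}^0 p_0~\txtd s$ combines cleanly with $\int_0^{\tau_+} s^q\,p_0(-s)~\txtd s$ to give the prefactor $(1-\txte^{(a-\tau_+)/\beta})$. Everything else is a direct evaluation, parallel to the uniform case in Proposition~\ref{prop:uniform}.
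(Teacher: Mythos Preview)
Your proposal is correct and follows essentially the same approach as the paper: apply Corollary~\ref{cor:pform}, evaluate the two weight integrals directly, and reduce $\int_0^{\tau_+} s^q\,p_0(-s)\,\txtd s$ to a difference of upper incomplete gamma functions via the substitution $u=s/\beta$. The paper's own proof is even terser---it simply refers back to the uniform computation in Proposition~\ref{prop:uniform} and remarks that the moment integral ``can be easily re-written in terms of incomplete gamma-functions''---so your write-up in fact fills in more detail than the paper does, including the normalization bookkeeping needed to reconcile the stated density with the claimed prefactors.
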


\begin{proof}
The only minor difference to the type of calculation in the proof of 
Proposition~\ref{prop:uniform} is that the integral $\int_0^{\tau_+} s^q~p_0(-s)~\txtd s$
is slightly more complicated and can be easily re-written in terms of 
incomplete gamma-functions.
\end{proof}

In principle, one can now also do many other types of calculations for given
classical probability measures $\mu_0$. Furthermore, similar smooth approximation
results as Corollary~\ref{cor:det} hold for mixture cases but we are not going 
to state them explicitly here.

\subsection{Distribution Transformation}
\label{ssec:transform}

In addition to rigid transformations, random-to-deterministic mappings, and mixture
distributions, one may also ask under which conditions we could obtain a particular
target distribution. The questions is, given two probability measures 
$\mu_\textnormal{in}$ and $\mu_\textnormal{out}$, does there exists a vector field 
such that $\mu_0=\mu_\textnormal{in}$ and $\mu_\textnormal{out}=\mu_*$? We have already
seen in Theorem~\ref{thm:shift} that this question is nontrivial. We make 
the following assumptions for the problem setup:

\begin{itemize}
 \item[(M1)] suppose $\mu_0$ is given and has density $p_0$,
 \item[(M2)] there exists a mapping $\Pi$, which is invertible and 
 analytic on $\textnormal{supp}(p_0)$, such that $\Pi(\tau_0)=\tau_*$. 
\end{itemize}

Assumption~(M1) on the existence of a density is made for notational convenience; it 
can be slightly relaxed to the accumulation point condition as in 
Theorem~\ref{thm:shift}. Assumption~(M2) implies 
$p_*(r)=(p_0\circ \Pi^{-1})(r) \frac{\txtd \Pi^{-1}}{\txtd r}(r)$. Dropping analyticity 
in~(M2) leads to a relatively trivial problem as
Theorem~\ref{thm:neithm1} implies that the number of target measures $\mu_*$ is 
extremely restricted in the case $f\not\in C^{\alpha}$. We are going to 
use the notation 
\be
\Pi(s)=\sum_{k=0}^\I \pi_ks^k
\ee
for the convergent power series of the map $\Pi$. We are going to show that for certain 
classes of given analytic maps $\Pi$ a certain necessary condition for the existence 
of an analytic vector field $f$, satisfying (A1)-(A6) and (B1)-(B3), can be based upon on the 
classical theory of infinite matrices. For infinite-dimensional matrices with countable 
indices we use the notation $M=(m_{ij})_{i,j\in \N}$ and denote by $\Id$ the matrix 
with entries given by the Kronecker delta $\delta_{ij}$. $M$ is lower-diagonal
if $m_{ij}=0$ for all $j>i$.

\begin{thm}
\label{thm:matrewrite}
A necessary condition for the existence of $f\in C^\alpha$ without buffer 
points such that (M1)-(M2), (A1)-(A6), and (B1)-(B3) hold is the existence of an
infinite vector $v:=(v_1,v_2,\ldots)^\top$ with $v_k\in\R$ such 
that $v_k\neq 0$ for some $k\in\N$, and $v$ satisfies a linear system 
\be
\label{eq:linsyst}
(M- \textnormal{Id})v=0 
\ee
for a matrix $M$, and $M$ is computable recursively from $\{\pi_k\}_{k\in\N_0}$
if $\pi_0\neq 0$.
\end{thm}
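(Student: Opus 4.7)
The plan is to reformulate the no-buffer-point entry/exit relation as a formal identity of power series and then read off the linear system from matching coefficients.

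First, by~\eqref{eq:slow_simple} the normalization $\xi(0)=0$ gives $\xi(s)=s$, so
\benn
\Psi(\tau)=\int_0^\tau \lambda_1(s)\,\txtd s \quad\text{and}\quad \textnormal{Re}\,\Psi(\tau)=A(\tau):=\int_0^\tau a_1(s)\,\txtd s,
\eenn
where by (B2) (following the argument in the proof of Theorem~\ref{thm:shift}) the discriminant of the Jacobian is nonvanishing, hence $a_1$ is real-analytic. The no-buffer-point hypothesis together with (M2) gives $\tau_*=\Pi(\tau_0)$ with $\Pi$ analytic, so the defining relation~\eqref{eq:wayinwayout} reads
\benn
A(\Pi(\tau))=A(\tau)
\eenn
for $\tau\in\textnormal{supp}(p_0)$. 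Since $\textnormal{supp}(p_0)$ has an accumulation point (being the support of a density by~(M1)) and both sides are real-analytic in $\tau$, the principle of permanence upgrades this to an identity of convergent power series about $0$, hence a formal power series identity.

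Second, I expand everything and match coefficients. Writing $a_1(s)=\sum_{j\geq 0}a_j s^j$ and setting $v_k:=a_{k-1}/k$ for $k\geq 1$ gives $A(\tau)=\sum_{k\geq 1}v_k\tau^k$. For the convergent series $\Pi(\tau)=\sum_{k\geq 0}\pi_k\tau^k$, define
\benn
M_{j,k}:=[\tau^j]\,\Pi(\tau)^k,\qquad j,k\in\N.
\eenn
These entries obey the Cauchy-product recursion $M_{j,1}=\pi_j$ and $M_{j,k}=\sum_{i=0}^{j}\pi_i\,M_{j-i,k-1}$, so they are computable column-by-column from $\{\pi_k\}_{k\in\N_0}$. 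Substituting $\Pi(\tau)$ into $A$ and matching the coefficient of $\tau^j$ on each side of $A(\Pi(\tau))=A(\tau)$ yields, for every $j\geq 1$,
\benn
\sum_{k\geq 1}M_{j,k}v_k=v_j,
\eenn
which is precisely $(M-\Id)v=0$ for $M=(M_{j,k})_{j,k\in\N}$.

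Third, I check nontriviality and the role of $\pi_0$. By~(A3), $\textnormal{sign}\,a_1(s)=\textnormal{sign}\,s$, so $a_1\not\equiv 0$, some $a_j\ne 0$, and correspondingly some $v_k\ne 0$. Thus the existence of an analytic $f$ with the stated properties forces a nonzero solution of $(M-\Id)v=0$. The hypothesis $\pi_0\ne 0$ enters only at the recursive computation of $M$: if $\pi_0=0$ then $\Pi(\tau)^k$ starts at degree $k$, forcing $M_{j,k}=0$ for $j<k$ and collapsing $M$ to a trivial lower-triangular form in which the Cauchy recursion degenerates, so $\pi_0\ne 0$ is what keeps every column genuinely populated by the Cauchy-product recursion. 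The main substantive step is the promotion of the pointwise identity on $\textnormal{supp}(p_0)$ to an identity of formal power series via analytic continuation, exactly as in Theorem~\ref{thm:shift}; once that is in place the remainder is organized bookkeeping with Cauchy products.
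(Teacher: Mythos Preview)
Your argument is correct and follows essentially the same route as the paper: reduce the entry/exit relation to the analytic identity $A(\Pi(\tau))=A(\tau)$, invoke the identity theorem via an accumulation point in $\textnormal{supp}(p_0)$, and read off $(M-\Id)v=0$ by matching power-series coefficients. The paper uses the indexing $A_1(s)=\sum_{k\ge 2}v_{k-1}s^k$ (absorbing $a_1(0)=0$ from (A3) into the series) rather than your $A(\tau)=\sum_{k\ge 1}v_k\tau^k$, but this is only a relabeling.

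The one substantive point worth correcting is your account of the role of $\pi_0\neq 0$. Your Cauchy-product recursion $M_{j,k}=\sum_{i=0}^{j}\pi_i M_{j-i,k-1}$ does not degenerate when $\pi_0=0$; it remains perfectly valid and in fact proves a slightly stronger statement than the theorem asks for. The paper instead uses the differentiation-based (J.C.P.~Miller) recursion
\benn
\pi_{0,k}=\pi_0^k,\qquad \pi_{j,k}=\frac{1}{j\pi_0}\sum_{l=1}^{j}\bigl[(k+1)l-j\bigr]\pi_l\,\pi_{j-l,k},
\eenn
which genuinely requires $\pi_0\neq 0$ because of the factor $1/(j\pi_0)$; this is why the hypothesis appears in the statement. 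Your recursion is a legitimate alternative, but the sentence explaining why $\pi_0\neq 0$ is needed should be dropped or rephrased.
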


\begin{proof}
As in the proof of Theorem~\ref{thm:shift}, we know that the discriminant of 
the Jacobian along the critical manifold is negative so that $\lambda_1=a_1+\txti b_1$
where $a_1=a_1(s)$ is real-analytic. Furthermore, since there are no buffer points, 
we must have
\be
\label{eq:condmat}
0=\textnormal{Re}(\Psi(\tau_*))-\textnormal{Re}(\Psi(\tau_0))=
\int_{\tau_0}^{\tau_*} a_1(s)~\txtd s=A_1(\tau_*)-A_1(\tau_0) 
\ee
where $A_1$ is obtained via term-by-term integration of $a_1$. Clearly, $A_1$
is real-analytic as well and we use the notation $A_1(s)=\sum_{k=2}^\I v_{k-1}s^k$; note
the vanishing of the eigenvalue $a_1(0)=0$ in~(A3) implies the particular form
of the power series of $A_1$. Clearly, \eqref{eq:condmat} is a necessary 
condition for the existence of $f$, which can be re-written as
\bea
A_1(\tau_*)-A_1(\tau_0)&=&A_1(\Pi(\tau_0))-A_1(\tau_0)\nonumber\\
&=&\sum_{k=2}^\I v_{k-1}\left(\sum_{j=0}^\I\pi_j s^j\right)^k-
\sum_{k=2}^\I v_{k-1}s^k=0\label{eq:condser}.
\eea
Since $\mu_0$ has a density $p_0$, it follows that the domain of $\lambda_1$, 
and hence the domain of $a_1$ and $A_1$, has an accumulation point. Therefore, 
the equality~\eqref{eq:condser} holds if and only if the coefficients of 
each power $s^k$ vanish identically.
We can re-write~\eqref{eq:condser} as the solution of a linear system with an 
infinite matrix $M$ so that $(M-\textnormal{Id})v=0$. It is easy to see
that $M$ is computable recursively from $\pi_j$; indeed, if we let 
\be
\label{eq:psequat}
\Pi(s)^k=\left(\sum_{j=0}^\I\pi_j s^j\right)^k=:\sum_{j=0}^\I \pi_{j,k}s^j
\ee
then $\pi_{0,k}=\pi_0^k$ and $\pi_{j,k}=\frac{1}{j\pi_0}\sum_{l=1}^j[(k+1)l-j]
\pi_l\pi_{j-l,k}$~which follows from differentiating \eqref{eq:psequat} with respect 
to $s$ and re-arranging terms.
\end{proof}

It may seem that computing $v$ by solving~\eqref{eq:linsyst} may also be 
sufficient since one can just use the coefficients $v_j$ to get the required 
analytic eigenvalue function $a_1(s)$ using the same trick as in the proof of 
Theorem~\ref{thm:buffer1} replacing $\txte^{-ay}y$ in~\eqref{eq:Hopf_nf1}
by a more general analytic function of $y$. However,  the problem is that
$\sum_{k=0}^\I v_ks^k$ may not be a convergent power series on the required 
domain of definition.

At first, it may seem natural to adopt an operator-theoretic viewpoint 
for $M$. Let $w:\N\ra (0,+\I)$ be a weight function and consider
\be
l^p(\N,w):=\left\{v=(v_k)_{k\in\N}:\|v\|_{p,w}:=
\left(\sum_{k=1}^\I |v_k|^p w(k)\right)^{1/p}<\I\right\}
\ee
for $p\in[1,+\I)$. The next result shows why an operator-theoretic viewpoint 
leads to substantial complications for arbitrary analytic maps $\Pi$.

\begin{lem}
Given any weight function $w$ such that $(w(k))_{k\in \N}\in l^1(\N,1)$ and 
fix any $p\in[1,+\I)$, then there exists an analytic map $\Pi$ such that $M$ 
does not map $l^p(\N,w)$ into itself.
\end{lem}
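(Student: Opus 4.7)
The plan is to defeat $M$ as a map $l^p(\N,w)\to l^p(\N,w)$ by exhibiting a single analytic $\Pi$ for which $M$ has a very simple but exponentially growing diagonal form. Concretely, I would take $\Pi(s)=Rs$ with a constant $R>1$ to be chosen. This is entire (hence analytic), and its Taylor coefficients are $\pi_0=0$, $\pi_1=R$, and $\pi_j=0$ for $j\geq 2$.

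Feeding this $\Pi$ into the construction of $M$ in the proof of Theorem~\ref{thm:matrewrite}, I compute $\Pi(s)^k=R^k s^k$, hence $\pi_{j,k}=R^k\delta_{jk}$. Unwinding the index shifts relating the array $\pi_{j,k}$ to the entries of $M$ in~\eqref{eq:linsyst}, the resulting matrix is diagonal, with its $k$-th diagonal entry a fixed power $R^{k+c}$ (the precise integer shift $c$ is dictated by the indexing convention, but the essential feature is that $|M_{kk}|$ grows like $R^k$).

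Having this structure, I would produce a concrete $v\in l^p(\N,w)$ with $Mv\notin l^p(\N,w)$ by setting $v_k:=w(k)^{-1/p}k^{-2/p}$, which is well-defined since $w(k)>0$. Then $|v_k|^p w(k)=k^{-2}$, so $\|v\|_{p,w}^p=\sum_k k^{-2}<\infty$ and $v\in l^p(\N,w)$. On the other hand $|(Mv)_k|^p w(k)=R^{p(k+c)}k^{-2}$, and the sum diverges because $R>1$. Hence $Mv\notin l^p(\N,w)$, which is exactly the failure claimed.

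I expect the only mildly delicate step to be the bookkeeping in converting $\pi_{j,k}$ into the matrix entries of $M$ with the indexing used in Theorem~\ref{thm:matrewrite}; no genuine obstacle arises, because for the choice $\Pi(s)=Rs$ the resulting operator is diagonal in any reasonable indexing and the exponential growth of $M_{kk}$ is what drives the contradiction. It is worth noting that this argument does not actually use the hypothesis $(w(k))\in l^1(\N,1)$; positivity of $w$ alone suffices, so the lemma is in fact strictly stronger than needed for the illustrative remark it supports about the breakdown of the operator-theoretic viewpoint.
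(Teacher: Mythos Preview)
Your argument is correct but proceeds along a different line from the paper. The paper chooses $\Pi$ with $\pi_0=\pi_1=\pi_2=1$, so that the \emph{first row} of $M$ has entries growing polynomially in the column index (concretely $M_{1,j}=\binom{j+1}{2}\pi_0^{j-1}\pi_1^2+(j+1)\pi_0^{j}\pi_2$); it then tests against the constant vector $v=(1,1,1,\ldots)$, which lies in $l^p(\N,w)$ precisely because of the standing hypothesis $(w(k))_k\in l^1(\N,1)$, and observes that $(Mv)_1=\sum_j M_{1,j}$ diverges. You instead take $\Pi(s)=Rs$ with $\pi_0=0$, which by the computation in Lemma~\ref{lem:trdiag} (or directly from $\Pi(s)^k=R^ks^k$) renders $M$ diagonal with $m_{kk}=R^{k+1}$, and you test against a tailored vector $v_k=w(k)^{-1/p}k^{-2/p}$ so that the $k$-th summand of $\|Mv\|_{p,w}^p$ picks up the factor $R^{p(k+1)}$. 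Your route is arguably cleaner to verify (diagonal $M$, no multinomial bookkeeping) and, as you note, does not actually invoke the summability of $w$; the paper's route is shorter to state but leans on the $l^1$ hypothesis to place the all-ones vector in the space. Both make the same qualitative point that $M$ cannot be treated as a bounded operator for general analytic $\Pi$.
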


\begin{proof}
Observe that the first row of $M$ is given by 
\benn
M_{1,\cdot}=(\pi_2,\ldots,
(M_{1,k-1}+k-1)\pi^{k-2}_0\pi_1^2+k\pi_0^{k-1}\pi_2,
(M_{1,k}+k)\pi_0^{k-1}+(k+1)\pi_0^k\pi_2,\ldots).
\eenn
Now select $v=(1,1,1,\ldots)$ which is clearly in $l^p(\N,w)$ 
as $(w(k))_{k\in \N}\in l^1(\N,1)$. Therefore, we have
\benn
(Mv)_1=\sum_{k=1}^\I M_{1,k}
\eenn
which diverges if we select suitable a suitable map $\Pi$, say for example 
$\pi_{0,1,2}=1$, and so $(Mv)\not\in l^p(\N,w)$.
\end{proof}

Similar results hold for other function spaces, i.e., $M$ is not tractable using the
classical theory of bounded operators if $\Pi$ is arbitrary. Many different 
restrictions for $\Pi$ are possible but a 
natural assumption is $\pi_0=0$ as this corresponds to the condition $\Pi(0)=0$ 
which should be imposed in the limit 
$\kappa_\mu\ra 0$, i.e., if the support of $p_0$ limits onto $s=0$, since any
vector field maps the initial condition to itself if no time has elapsed.

\begin{lem}
\label{lem:trdiag}
If $\pi_0=0$ then $M$ is lower-diagonal with entries $m_{ii}=\pi_1^{i+1}$.  
\end{lem}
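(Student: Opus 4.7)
The plan is to exploit the fact that when $\pi_0 = 0$, each power $\Pi(s)^k$ vanishes to order exactly $k$ at $s=0$, which will immediately force both the lower-diagonal structure and the stated form of the diagonal entries.

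First I would factor $\Pi(s) = s\,h(s)$, where $h(s) := \sum_{j=0}^\I \pi_{j+1} s^j$ is analytic at the origin with $h(0)=\pi_1$. Then $\Pi(s)^k = s^k h(s)^k$, and reading off the expansion in \eqref{eq:psequat} gives $\pi_{j,k}=0$ for every $j < k$ together with $\pi_{k,k}=h(0)^k=\pi_1^k$. Note that the recursion from the proof of Theorem~\ref{thm:matrewrite} is not directly available here, since it involves a division by $\pi_0$; the direct factorization replaces it.

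Next I would translate this into statements about $M$. From the construction in the proof of Theorem~\ref{thm:matrewrite}, equating coefficients of $s^{i+1}$ in $A_1(\Pi(s))=A_1(s)$, with $A_1(s)=\sum_{k=2}^\I v_{k-1}s^k$, yields
\benn
\sum_{\ell=1}^\I v_\ell\, \pi_{i+1,\ell+1}=v_i \qquad\text{for every }i\geq 1,
\eenn
so the matrix entries are $m_{i,\ell}=\pi_{i+1,\ell+1}$. Combining this identification with the vanishing pattern above, $m_{i,\ell}=0$ whenever $i+1<\ell+1$, i.e., whenever $\ell > i$, so $M$ is lower-diagonal; and on the diagonal $m_{ii}=\pi_{i+1,i+1}=\pi_1^{i+1}$, as claimed.

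The only real obstacle is keeping the index shift straight: there is a discrepancy of one between the label of $v_k$ and the $s$-power it corresponds to in $A_1$, and this shift propagates into the identification $m_{i,\ell}=\pi_{i+1,\ell+1}$. Apart from this bookkeeping, the lemma is essentially immediate once $s$ has been factored out of $\Pi$, so no serious analytic input is required.
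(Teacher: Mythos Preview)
Your argument is correct and matches the paper's own proof in spirit: both hinge on the observation that when $\pi_0=0$ the power $\Pi(s)^{k+1}$ contributes nothing below order $s^{k+1}$, which forces the lower-diagonal structure and pins down the diagonal as $\pi_1^{i+1}$. Your explicit factorization $\Pi(s)=s\,h(s)$ and the bookkeeping identity $m_{i,\ell}=\pi_{i+1,\ell+1}$ make the index shift cleaner than the paper's more informal phrasing, but the underlying idea is identical.
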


\begin{proof}
Recall that $M$ is constructed by collecting terms of different orders of $s^l$ for 
$l\in\{2,3,4,\ldots\}$ from the expression
\benn
\sum_{k=2}^\I v_{k-1}\left(\sum_{j=0}^\I\pi_j s^j\right)^{k}=
\sum_{k=1}^\I v_{k}\left(\sum_{j=1}^\I\pi_j s^j\right)^{k+1}
\eenn
where we have used $\pi_0=0$. In particular, $m_{ij}$ is can be nonzero if and only 
if $v_j$ appears in the term collected for order $l=i+1$. Therefore, fixing any $i$ 
the largest index where a possible nonzero entry $m_{ij}$ occurs is for $j=i$. In fact,
it is also easy to see that the only term arising on the diagonal is $\pi_1^{i+1}$. 
\end{proof}

Even though $M$ is lower-diagonal, its entries still grow via certain multinomial coefficients.
Therefore, $M$ is not be a bounded operator for many $\Pi$. The following result 
even shows that it will be impossible to find an exact 
solution $v$ in many cases without imposing additional conditions.

\begin{prop}
If $\pi_0=0$ and $\pi_1\neq\pm1$, then $(M-\textnormal{Id})v=0$ if and only if 
$v=(0,0,0,\ldots)^\top$.
\end{prop}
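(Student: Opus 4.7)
The plan is to exploit the triangular structure of $M-\Id$ provided by Lemma~\ref{lem:trdiag} and solve the system entry-by-entry via forward substitution. Since $\pi_0=0$, Lemma~\ref{lem:trdiag} tells us that $M$ is lower-diagonal with diagonal entries $m_{ii}=\pi_1^{i+1}$. Consequently $M-\Id$ is also lower-diagonal with diagonal entries $\pi_1^{i+1}-1$. The first observation to record is that each of these diagonal entries is nonzero: the coefficients $\pi_k$ of the entry/exit map $\Pi:(-\infty,0]\to[0,+\infty)$ are real, so $\pi_1\in\R$; for a real number the equation $\pi_1^{i+1}=1$ forces $\pi_1=1$ (any $i$) or $\pi_1=-1$ (with $i$ odd), both of which are excluded by hypothesis. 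Hence $\pi_1^{i+1}-1\neq 0$ for every $i\in\N$.

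Once invertibility of each diagonal entry is in hand, I would read off the equations of the system $(M-\Id)v=0$ row by row. The first row reads $(\pi_1^2-1)v_1=0$, so $v_1=0$. The $i$-th row has the form
\[
\sum_{j=1}^{i-1} m_{ij}v_j+(\pi_1^{i+1}-1)v_i=0,
\]
and assuming inductively that $v_1=\cdots=v_{i-1}=0$, the sum on the left collapses to $(\pi_1^{i+1}-1)v_i=0$, which forces $v_i=0$. A straightforward induction on $i$ then yields $v=(0,0,0,\ldots)^\top$. Conversely, the zero vector trivially solves $(M-\Id)v=0$, completing the equivalence.

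There is really no serious obstacle here: once Lemma~\ref{lem:trdiag} is invoked, the argument is a one-line forward-substitution for a triangular system with invertible diagonal. The only subtle point worth emphasizing in the write-up is the use of the reality of $\pi_1$ to rule out the diagonal entries $\pi_1^{i+1}-1$ vanishing for some higher $i$ even though $\pi_1\neq\pm 1$. This is implicit in the setup (Theorem~\ref{thm:shift} already exploits that $a_1$, and hence the entry/exit map derived from it, is real-analytic on $\R$), but stating it explicitly makes the argument self-contained.
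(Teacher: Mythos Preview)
Your proof is correct and follows essentially the same approach as the paper: invoke Lemma~\ref{lem:trdiag} to obtain the lower-triangular structure with diagonal entries $\pi_1^{i+1}$, then use forward substitution (induction on rows) together with $\pi_1^{i+1}\neq 1$ to force each $v_i=0$. Your write-up is in fact slightly more explicit than the paper's, spelling out the general inductive step and the converse direction.
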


\begin{proof}
From Lemma~\ref{lem:trdiag} we know that $m_{11}=\pi_1^2$. The first row
of $(M-\Id)v=0$ yields 
\benn
0=v_1 (m_{11}-1)=v_1(\pi_1^2-1)\quad \Leftrightarrow\quad v_1=0
\eenn
as $\pi_1^2\neq 1$. A direct process by mathematical induction on the 
rows yields the result.
\end{proof}

In general, it is no problem to impose the condition $\pi_1=1$ or $\pi_1=-1$
as we only want to match a pair of densities $p_0,p_*$ via $\Pi$ which have 
supports outside of the compact set $[-\kappa_\mu,\Pi(-\kappa_\mu)]$ containing 
the origin. However, imposing both conditions $\pi_0=0$ and $\pi_1=\pm1$ does
not work for nonlinear maps $\Pi$.

\begin{prop}
\label{prop:impossible}
If $\pi_0=0$, $\pi_1=\pm1$ and $v$ is a nontrivial solution then $\pi_k=0$ for 
all $k\geq2$. 
\end{prop}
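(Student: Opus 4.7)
My plan is to argue by contradiction using the necessary formal power series identity $A_1(\Pi(s))=A_1(s)$ from Theorem~\ref{thm:matrewrite}, equivalent to $(M-\Id)v=0$ solved row by row. Assume some $\pi_m\neq 0$ with $m\geq 2$ and pick $m$ minimal. Writing $\Pi(s)=\epsilon s+\pi_m s^m+O(s^{m+1})$ with $\epsilon:=\pi_1\in\{\pm 1\}$, a multinomial expansion shows that $M_{i,l}=[s^{i+1}]\Pi(s)^{l+1}$ vanishes for $l<i<l+m-1$, and the first nonzero off-diagonal band is
\[
M_{l+m-1,\,l}=(l+1)\,\epsilon^l\,\pi_m.
\]

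For $\epsilon=+1$, every diagonal entry satisfies $M_{ll}-1=0$, so each row of $(M-\Id)v=0$ reduces to the pure constraint $\sum_{l<i}M_{il}v_l=0$. I would proceed row by row starting at $i=m$: row $i=m$ reads $M_{m,1}v_1=2\pi_m v_1=0$, forcing $v_1=0$; row $i=m+1$ reads $M_{m+1,2}v_2=3\pi_m v_2=0$ (the other band entries that could appear in this row vanish because $\pi_m$ is the smallest nonzero Taylor coefficient of $\Pi$), giving $v_2=0$; and inductively row $i=m+k-1$ yields $(k+1)\pi_m v_k=0$, so $v_k=0$ for every $k\geq 1$. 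This contradicts the nontriviality of $v$, hence $\pi_m=0$, against the minimality assumption.

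For $\epsilon=-1$, the diagonal $M_{ll}=(-1)^{l+1}$ alternates, so the even-indexed rows determine $v_l$ rather than constraining the $\pi$'s. To handle this case I would first reduce to the previous one by considering $\tilde\Pi:=\Pi\circ\Pi$, which satisfies the same identity $A_1\circ\tilde\Pi=A_1$ (by applying $A_1\circ\Pi=A_1$ twice) and has $\tilde\Pi'(0)=\epsilon^2=+1$; applying the $\epsilon=+1$ argument above to $\tilde\Pi$ with the same nontrivial $v$ yields $\tilde\Pi=\Id$ as a formal power series, i.e., $\Pi$ is a formal involution. Combining this involutive rigidity with the odd-row constraints, together with the implicit nondegeneracy $v_1=\tfrac{1}{2}a_1'(0)\neq 0$ inherited from (A3)--(A4), the analogous row-by-row argument carried out on the odd rows $i=m,m+2,\ldots$ propagates $\pi_m=0$. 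The main technical obstacle is precisely this $\epsilon=-1$ case: without the involution reduction the even-indexed rows admit cancellations that obstruct the naive leading-order matching, and the combination of the $\Pi\circ\Pi$ reduction and the positivity of $v_1$ is what ultimately closes the argument.
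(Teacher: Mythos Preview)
For $\pi_1=+1$ your argument is correct and runs dual to the paper's: you fix the minimal $m\geq 2$ with $\pi_m\neq 0$ and show $v_k=0$ for all $k$ row by row, whereas the paper fixes the minimal $j$ with $v_j\neq 0$ and shows $\pi_k=0$ for all $k\geq 2$ subdiagonal by subdiagonal. One small correction: in row $i=m+k-1$ the entries $M_{i,l}$ with $l<k$ need not vanish (they involve $\pi_{m+1},\pi_{m+2},\ldots$ and, when $m=2$, also $\pi_m^2$); the reason they can be ignored is that they multiply $v_l$'s already shown to be zero in earlier rows, so your induction still goes through.

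For $\pi_1=-1$ there is a genuine gap that cannot be filled. Your composition trick correctly forces $\Pi$ to be a formal involution, but this together with $v_1\neq 0$ and the odd-row constraints does \emph{not} imply $\pi_k=0$. A concrete obstruction: take
\[
\Pi(s)=\frac{-s}{1-cs},\qquad A_1(s)=\frac{s^2}{1-cs},\qquad c\neq 0.
\]
Then $\pi_0=0$, $\pi_1=-1$, $\pi_2=-c\neq 0$, $\Pi\circ\Pi=\Id$, and one checks directly that $A_1(\Pi(s))=A_1(s)$, so $v=(1,c,c^2,\ldots)$ is a nontrivial analytic solution with $v_1=1\neq 0$. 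In this example every odd-row constraint is an identity (e.g.\ row $i=3$ reads $M_{3,1}v_1+M_{3,2}v_2=3c^2\cdot 1-3c\cdot c=0$), so there is nothing left to ``propagate''. The paper's own argument stumbles at the same place: for $\pi_1=-1$ the row-$j$ equation $(\pi_1^{j+1}-1)v_j=0$ forces $j$ odd, whence $\pi_1^{j+2}-1=-2\neq 0$, and the row-$(j+1)$ equation then determines $v_{j+1}$ rather than forcing $\pi_2=0$. In short, the $\pi_1=-1$ half of the statement as written admits counterexamples, and your sketch for that case cannot be completed.
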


\begin{proof}
Since $v$ is nontrivial, there exists some $j$ such that $v_k=0$
for all $k<j$ and $v_j\neq 0$. Consider the entry $m_{(j+1)j}$ and observe 
that it can be explicitly calculated 
\benn
m_{(j+1)j}=(j+1)\pi_2. 
\eenn
Since $v_k=0$ for all $k<j$ we must solve the
following equation coming from the $(j+1)$-row of $(M-\Id)v=0$
\benn
(j+1)\pi_2 v_j+v_{j+1}(\pi_1^{j+2}-1)=0.
\eenn
Since $\pi_1=1$ or $\pi_1=-1$ it follows that $\pi_2=0$ as $v_j\neq 0$.
By induction on the minor diagonals of the matrix we see that $\pi_2=0$ then
implies $\pi_3=0$ and so on.
\end{proof}

\textbf{Acknowledgements:} I would like to thank the European Commission (EC/REA) 
for support by a Marie-Curie International Re-integration Grant. 
I also acknowledge support via an APART fellowship of the Austrian Academy of 
Sciences ({\"{O}AW}).

\bibliographystyle{plain}
\bibliography{../my_refs}

\begin{thebibliography}{10}

\bibitem{BaerErneuxRinzel}
S.M. Baer, T.~Erneux, and J.~Rinzel.
\newblock The slow passage through a {Hopf} bifurcation: delay, memory effects,
  and resonance.
\newblock {\em SIAM J. Appl. Math.}, 49(1):55--71, 1989.

\bibitem{BaerGaekel}
S.M. Baer and E.M. Gaekel.
\newblock Slow acceleration and deacceleration through a {H}opf bifurcation:
  power ramps, target nucleation, and elliptic bursting.
\newblock {\em Phys. Rev. E}, 78:036205, 2008.

\bibitem{Benoit8}
E.~Beno{\^{i}}t, editor.
\newblock {\em Dynamic Bifurcations}, volume 1493 of {\em Lecture Notes in
  Mathematics}.
\newblock Springer, 1991.

\bibitem{BerglundGentz}
N.~Berglund and B.~Gentz.
\newblock {\em Noise-Induced Phenomena in Slow-Fast Dynamical Systems}.
\newblock Springer, 2006.

\bibitem{ChenKololnikovTzouGai}
Y.~Chen, T.~Kolokolnikov, J.~Tzou, and C.~Gai.
\newblock Patterned vegetation, tipping points, and the rate of climate change.
\newblock {\em Eur. J. Appl. Math.}, pages 1--14, 2015.

\bibitem{CoddingtonLevinson}
E.A. Coddington and N.~Levinson.
\newblock {\em Theory of Ordinary Differential Equations}.
\newblock McGraw-Hill, 1955.

\bibitem{VandenBroeckMandel}
C.~Van den Broeck and P.~Mandel.
\newblock Delayed bifurcations in the presence of noise.
\newblock {\em Phys. Lett. A}, 122:36--38, 1987.

\bibitem{DumortierRoussarie}
F.~Dumortier and R.~Roussarie.
\newblock {\em Canard Cycles and Center Manifolds}, volume 121 of {\em Memoirs
  Amer. Math. Soc.}
\newblock AMS, 1996.

\bibitem{Fenichel4}
N.~Fenichel.
\newblock Geometric singular perturbation theory for ordinary differential
  equations.
\newblock {\em J. Differential Equat.}, 31:53--98, 1979.

\bibitem{FruchardSchaefke}
A.~Fruchard and R.~Sch{\"{a}fke}.
\newblock A survey of some results on overstability and bifurcation delay.
\newblock {\em Discr. Cont. Dyn. Sys. - Series S}, 2(4):941--965, 2009.

\bibitem{Haberman}
R.~Haberman.
\newblock Slowly varying jump and transition phenomena associated with
  algebraic bifurcation problems.
\newblock {\em SIAM J. Appl. Math.}, 37(1):69--106, 1979.

\bibitem{HoldenErneux}
L.~Holden and T.~Erneux.
\newblock Slow passage through a {Hopf} bifurcation: from oscillatory to steady
  state solutions.
\newblock {\em SIAM J. Appl. Math.}, 53(4):1045--1058, 1993.

\bibitem{JansonsLythe}
K.M. Jansons and G.D. Lythe.
\newblock Stochastic calculus: application to dynamic bifurcations and
  threshold crossings.
\newblock {\em J. Stat. Phys.}, 90:227--251, 1998.

\bibitem{Jones}
C.K.R.T. Jones.
\newblock Geometric singular perturbation theory.
\newblock In {\em Dynamical Systems (Montecatini Terme, 1994)}, volume 1609 of
  {\em Lect. Notes Math.}, pages 44--118. Springer, 1995.

\bibitem{KruSzm1}
M.~Krupa and P.~Szmolyan.
\newblock Geometric analysis of the singularly perturbed fold.
\newblock {\em in: Multiple-Time-Scale Dynamical Systems}, IMA Vol.
  122:89--116, 2001.

\bibitem{KuehnBook}
C.~Kuehn.
\newblock {\em Multiple Time Scale Dynamics}.
\newblock Springer, 2015.
\newblock 814 pp.

\bibitem{Kuske}
R.~Kuske.
\newblock Probability densities for noisy delay bifurcation.
\newblock {\em J. Stat. Phys.}, 96(3):797--816, 1999.

\bibitem{Kuske1}
R.~Kuske.
\newblock Gradient-particle solutions of {Fokker-Planck} equations for noisy
  delay bifurcations.
\newblock {\em SIAM J. Sci. Comput.}, 22(1):351--367, 2000.

\bibitem{DeMaesschalck}
P.~De Maesschalck.
\newblock On maximum bifurcation delay in real planar singularly perturbed
  vector fields.
\newblock {\em Nonlinear Analysis}, 68:547--576, 2008.

\bibitem{Neishtadt1}
A.I. Neishtadt.
\newblock Persistence of stability loss for dynamical bifurcations. {I}.
\newblock {\em Differential Equations Translations}, 23:1385--1391, 1987.

\bibitem{Neishtadt2}
A.I. Neishtadt.
\newblock Persistence of stability loss for dynamical bifurcations. {II}.
\newblock {\em Differential Equations Translations}, 24:171--176, 1988.

\bibitem{Schecter}
S.~Schecter.
\newblock Persistent unstable equilibria and closed orbits of a singularly
  perturbed equation.
\newblock {\em J. Differential Equat.}, 60:131--141, 1985.

\bibitem{Shishkova}
M.A. Shishkova.
\newblock Analysis of a system of differential equations with a small parameter
  at the higher derivatives.
\newblock {\em Akademiia Nauk SSSR, Doklady}, 209:576--579, 1973.

\bibitem{Sontag}
E.D. Sontag.
\newblock {\em Mathematical Control Theory}.
\newblock Springer, 2nd edition, 1998.

\bibitem{Stiefenhofer2}
M.~Stiefenhofer.
\newblock Singular perturbation with {Hopf} points in the fast dynamics.
\newblock {\em Z. Angew. Math. Phys.}, 49(4):602--629, 1998.

\bibitem{Strand}
J.L. Strand.
\newblock Random ordinary differential equations.
\newblock {\em J. Differential Eqaut..}, 7(3):538--553, 1970.

\bibitem{Su}
J.~Su.
\newblock The phenomenon of delayed bifurcation and its analysis.
\newblock In C.K.R.T. Jones, editor, {\em Multiple Time Scale Dynamical
  Systems}, volume 122 of {\em The IMA Volumes in Mathematics and its
  Applications}, pages 203--214. Springer, 2001.

\bibitem{TzouWardKolokolnikov}
J.C. Tzou, M.J. Ward, and T.~Kolokolnikov.
\newblock Slowly varying control parameters, delayed bifurcations, and the
  stability of spikes in reaction-diffusion systems.
\newblock {\em Physica D}, 290:24--43, 2015.

\bibitem{ZeghlacheMandelVandenBroeck}
H.~Zeghlache, P.~Mandel, and C.~Van den Broeck.
\newblock Influence of noise on delayed bifurcations.
\newblock {\em Phys. Rev. A}, 40:286--294, 1989.

\end{thebibliography}

\end{document}